\newcommand{\neutralize}[1]{\expandafter\let\csname c@#1\endcsname\count@}
\journalname{}
\newtheorem{assumption}{Assumption}
\begin{document}
\bibliographystyle{plainnat}
	\title{Limit theorems for filtered long-range dependent random fields}
	\author{T. Alodat\and N. Leonenko\and A. Olenko}
	
	\institute{T. Alodat
		\at	
		Department of Mathematics and Statistics, La Trobe University,
		\at	
		Melbourne, VIC, 3086, Australia
		\at
		\email{alodat.t@students.latrobe.edu.au}\\
		\and
		N. Leonenko
		\at	
		School of Mathematics, Cardiff University, Senghennydd Road, 
		\at
		Cardiff CF2 4YH, UK
		\at
		\email{leonenkon@Cardiff.ac.uk}\\
		\and \Letter\  A. Olenko
		\at	
		Department of Mathematics and Statistics, La Trobe University,
		\at	
		Melbourne, VIC, 3086, Australia \at
		\email{a.olenko@latrobe.edu.au}           
	}

	\date{}
	
	\maketitle

	\begin{abstract}
		This article investigates general scaling settings and limit distributions of functionals of filtered random fields. The filters are defined by the convolution of non-random kernels with functions of Gaussian random fields. The case of long-range dependent fields and increasing observation windows is studied. The obtained limit random processes are non-Gaussian. Most known results on this topic give asymptotic processes that always exhibit non-negative auto-correlation structures and have the self-similar parameter $H\in(\frac{1}{2},1)$. In this work we also obtain convergence for the case $H\in(0,\frac{1}{2})$ and show how the Hurst parameter $H$ can depend on the shape of the observation windows. Various examples are presented.
		\keywords{filtered random fields \and long-range dependence \and self-similar processes\and non-central limit theorem\and Hurst parameter}
		\subclass{60G60 \and 60F17 \and 60F05 \and 60G35}
	\end{abstract}
	\section{Introduction}
Over the last four decades, several studies dealt with various functionals of random fields and their asymptotic behaviour~\cite{anh2017rate,anh2015rate,bai2013multivariate,dobrushin1979non,doukhan2002theory,taqqu1979convergence,taqqu1975weak,ivanov2008semiparametric,leonenko2014sojourn,kratz2017central}. These functionals play an important role in various fields, such as physics, cosmology, telecommunications, just to name a few. In particular, asymptotic results were obtained either for integrals or additives functionals of random fields under long-range dependence, see~\cite{leonenko2006weak,weak2017alodat,olenko2010limit,olenko2013limit,doukhan2002theory,nourdin2014central,leonenko2017rosenblatt} and the references~therein.  

It is well known that functionals of Gaussian random fields with long-range dependence can have non-Gaussian asymptotics and require normalising factors different from those in central limit theorems. These limit processes are known as Hermite or Hermite-Rosenblatt processes. The first result in this direction was obtained in~\cite{rosenblatt1961independence} where quadratic functionals of long-range dependent stationary Gaussian sequences were investigated. The pioneering results in the asymptotic theory of non-linear functionals of long-range dependent Gaussian processes and sequences can be found in~\cite{taqqu1979convergence,taqqu1975weak,dobrushin1979non,rosenblatt1981limit,taqqu1978representation}. This line of studies attracted much attention, for example, in~\cite{pakkanen2016functional} it was shown that the limiting distribution of generalised variations of a long-range dependent fractional Brownian sheet is a fractional Brownian sheet that is independent and different from the original one. Some statistical properties of the Rosenblatt distribution, as well as its expansion in terms of shifted chi-squared distributions were studied in~\cite{veillette2013}. The L\'{e}vy-Khintchine formula and asymptotic properties of the L\'{e}vy measure, were also addressed in~\cite{leonenko2017rosenblatt}. Some weighted functionals for long-range dependent random fields were considered and limit theorems were investigated in a number of papers, including~\cite{olenko2013limit,ivanov2013limit,ivanov1989statistical}.

Linear stochastic processes and random fields obtained as outputs of filters are popular models in various applications, see~\cite{jazwinski1970stochastic,wiener1949extrapolation,kallianpur2013stochastic,alomari2018estimation}. In engineering practice it is often assumed that a narrow band-pass filter applied to a stationary random input yields an approximately normally distributed output. Of course, such results are not true in general, especially when the stationary input has some singularity in the spectrum and the linear filtration is replaced by a non-linear one.

We recall the classical central-limit type theorem by Davydov~\cite{davydov1970invariance} for discrete time linear stochastic processes. 

\begin{theorem}{\rm\cite{davydov1970invariance}}
	Let $V(t)=\sum_{j\in\mathbb{Z}}G_{t-j}\xi_{j},\ t\in\mathbb{Z}$, where ${\xi_{j}}$ is a sequence of i.i.d random variables with zero mean and finite variance {\rm(the $\{\xi_{j}\}$ are not necessarily Gaussian)}. Suppose that ${G_{j}}$ is a real-valued sequence satisfying $\sum_{j\in\mathbb{Z}}G_{j}^{2}<\infty$ and let $X_{r}^{(d)}:=\sum_{t=1}^{r}V(t)$. If $Var X_{r}^{(d)}=r^{2H}\mathcal{L}^{2}(r)$ as $r\to\infty$, where $H\in(0,1)$ and the function $\mathcal{L}(\cdot)$ is a slowly varying at infinity, then
	\[
	X_{r}^{(d)}(t)=\frac{1}{r^{H}\mathcal{L}(r)}\sum_{s=1}^{[rt]}V(s)\stackrel{D}{\rightarrow}B_{H}(t),\ t>0,\ \text{as}\ r\to\infty,
	\]  
	in the sense of convergence of finite-dimensional distributions, where $B_{H}(t)$, $t>0$, is the fractional Brownian motion with zero mean and the covariance function
	$B_{H}(t,s)=\frac{1}{2}\left(\vert t\vert^{2H}+\vert s\vert^{2H}-\vert t-s\vert^{2H}\right),\ t,s>0,\ 0<H<1.$
	\end{theorem}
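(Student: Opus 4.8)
The plan is to exploit the linearity of $V$ to rewrite each partial sum as a single weighted sum of the i.i.d.\ variables $\{\xi_j\}$, and then to verify the two ingredients needed for convergence of finite-dimensional distributions to a Gaussian process: convergence of the covariances to the fractional Brownian covariance $B_H(t,s)$, and asymptotic normality of arbitrary linear combinations via a Lindeberg-type central limit theorem together with the Cram\'er--Wold device. Interchanging the order of summation gives $\sum_{s=1}^{[rt]}V(s)=\sum_{j\in\mathbb{Z}}b_{r,j}(t)\,\xi_j$, where $b_{r,j}(t):=\sum_{s=1}^{[rt]}G_{s-j}$, so that the normalised process is $S_r(t):=(r^{H}\mathcal{L}(r))^{-1}\sum_{j\in\mathbb{Z}}b_{r,j}(t)\,\xi_j$. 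The two-sided moving average $V$ is strictly stationary and $\sum_j b_{r,j}^2(t)<\infty$ by Cauchy--Schwarz and $\sum_j G_j^2<\infty$, so each $S_r(t)$ is a well-defined centred variable with variance $\sigma^2(r^{H}\mathcal{L}(r))^{-2}\sum_j b_{r,j}^2(t)$, where $\sigma^2=\mathrm{Var}(\xi_0)$.

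For the covariance step I would fix $0<s'<t$ and apply the polarisation identity $\mathrm{Cov}(A,B)=\tfrac12(\mathrm{Var}\,A+\mathrm{Var}\,B-\mathrm{Var}(A-B))$ with $A=\sum_{s=1}^{[rt]}V(s)$ and $B=\sum_{s=1}^{[rs']}V(s)$. Stationarity of $V$ identifies $\mathrm{Var}(A-B)$ with $\mathrm{Var}\big(\sum_{s=1}^{[rt]-[rs']}V(s)\big)$, and the hypothesis $\mathrm{Var}(\sum_{t=1}^n V(t))\sim n^{2H}\mathcal{L}^2(n)$ applied with $n=[rt],\,[rs'],\,[rt]-[rs']$ controls all three terms. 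Using $[r\theta]\sim r\theta$, $[rt]-[rs']\sim r(t-s')$, and the uniform convergence theorem for slowly varying functions, which gives $\mathcal{L}(r\theta)/\mathcal{L}(r)\to1$ for each fixed $\theta>0$, I would obtain $\mathrm{Cov}(S_r(t),S_r(s'))\to\tfrac12(t^{2H}+(s')^{2H}-(t-s')^{2H})=B_H(t,s')$, exactly the fractional Brownian covariance.

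To pass from covariances to distributional convergence I would invoke the Cram\'er--Wold device: for real $\lambda_1,\dots,\lambda_m$ and times $t_1,\dots,t_m$ write $\sum_{k=1}^m\lambda_k S_r(t_k)=\sum_{j\in\mathbb{Z}}c_{r,j}\,\xi_j$ with $c_{r,j}=(r^{H}\mathcal{L}(r))^{-1}\sum_k\lambda_k b_{r,j}(t_k)$, and apply the Lindeberg central limit theorem for weighted sums of i.i.d.\ variables. By the previous step the variance $\sigma^2\sum_j c_{r,j}^2$ converges to the quadratic form $\sum_{k,l}\lambda_k\lambda_l B_H(t_k,t_l)$, so it remains to verify the uniform asymptotic negligibility $\max_j|c_{r,j}|\to0$, under which the Lindeberg condition holds and the limit of $\sum_j c_{r,j}\xi_j$ is centred Gaussian. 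Since the limiting process carrying these covariances is precisely $B_H$, convergence of the finite-dimensional distributions follows.

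I expect the negligibility estimate $\max_j|c_{r,j}|\to0$ to be the main obstacle. The crude bound $|b_{r,j}(t)|\le\sqrt{[rt]}\,\|G\|_2$ yields $\max_j|c_{r,j}|=O(r^{1/2-H}/\mathcal{L}(r))$, which is adequate for $H>\tfrac12$ but fails for $H\le\tfrac12$. For the anti-persistent range $H\in(0,\tfrac12)$ one must instead control the sliding-window sums $b_{r,j}(t)=\sum_{l}G_l\,\mathbf{1}_{\{1-j\le l\le[rt]-j\}}$ more delicately, using the decay of the partial sums of $\{G_l\}$ (equivalently, the regular variation of $\mathrm{Var}(X_n^{(d)})$) to show that no single coefficient carries a non-vanishing fraction of the total $\ell^2$ mass. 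This is the step where the full strength of the variance hypothesis, rather than square-summability alone, must be used.
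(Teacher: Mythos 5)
Your overall route (rewrite $\sum_{s\le [rt]}V(s)$ as a weighted i.i.d.\ sum, get the covariance $B_H(t,s)$ by polarisation, stationarity and the uniform convergence theorem for slowly varying functions, then apply Cram\'er--Wold with a Lindeberg CLT for weighted sums) is sound and is essentially the classical argument behind this result; note that the paper itself gives no proof at all, it simply quotes the theorem from Davydov (1970), so there is nothing internal to compare against. However, your writeup has a genuine gap, and it sits exactly at the point that makes the theorem interesting in this paper (which emphasises $H\in(0,\tfrac12)$): you never establish the uniform negligibility $\max_j|c_{r,j}|\to0$ for $H\le\tfrac12$, you only observe that the Cauchy--Schwarz bound $|b_{r,j}(t)|\le\sqrt{[rt]}\,\Vert G\Vert_{\ell^2}$ fails there and say the step ``must be done more delicately.'' Since for non-Gaussian innovations the weighted sum $\sum_j c_{r,j}\xi_j$ (with unit $\ell^2$ norm) is asymptotically normal \emph{only if} the maximal coefficient vanishes, this step is not a technicality but the heart of the proof; as written, your argument proves the theorem only for $H>\tfrac12$.

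The gap can be closed without any new hypothesis, using the heavy overlap of the window sums $b_{r,j}(t)=\sum_{l=1-j}^{[rt]-j}G_l$: two windows shifted by $k$ differ by at most $2k$ coefficients of $G$, so $|b_{r,j+k}(t)-b_{r,j}(t)|\le\sqrt{2k}\,\Vert G\Vert_{\ell^2}$. Hence if $M_r:=\max_j|b_{r,j}(t)|$ is attained at $j_0$, then $|b_{r,j}(t)|\ge M_r/2$ for all $j$ within distance $cM_r^2$ of $j_0$ (with $c=1/(8\Vert G\Vert_{\ell^2}^2)$), which forces
\begin{equation*}
\mathrm{Var}\,X_{[rt]}^{(d)}=\sigma^2\sum_{j\in\mathbb{Z}}b_{r,j}^2(t)\;\ge\;\sigma^2\,cM_r^2\cdot\frac{M_r^2}{4},
\end{equation*}
so that $M_r=O\bigl(r^{H/2}\mathcal{L}^{1/2}(r)\bigr)=o\bigl(r^{H}\mathcal{L}(r)\bigr)$ for every $H\in(0,1)$. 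This yields $\max_j|c_{r,j}|\to0$, the Lindeberg condition follows as you indicated, and your proof then goes through on the whole range $H\in(0,1)$. I would also suggest stating explicitly that $\sum_j b_{r,j}(t)\xi_j$ converges in $L^2$ (immediate from $\{b_{r,j}(t)\}_j\in\ell^2$) and that only finite-dimensional convergence is claimed, so no tightness argument is required.
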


One can obtain an analogous result for the case of continuous time.
\begin{theorem}
	Let $V(t)=\int_{\mathbb{R}}G(t-s)\xi(s)ds,\ t\in\mathbb{R}$, be a linear filtered process, where $\xi(t),\ t\in\mathbb{R}$, be a mean-square continuous stationary in the wide sense process with zero mean and finite variance. Suppose that $G(t),\ t\in\mathbb{R}$, is a non-random function, such that $\int_{\mathbb{R}}G^{2}(t)dt<\infty$. Let $X_{r}^{(c)}:=\int_{0}^{r}V(s)ds$. If $Var X_{r}^{(c)}=r^{2H}\mathcal{L}^{2}(r),$ as $r\to\infty$, where $H\in(0,1)$ and $\mathcal{L}(\cdot)$ is slowly varying at infinity, then
	\[
	X_{r}^{(c)}(t)=\frac{1}{r^{H}\mathcal{L}(r)}\int_{0}^{rt}V(s)ds\stackrel{D}{\rightarrow}B_{H}(t),\ t>0,\ \text{as}\ r\to\infty,
	\]
	in a sense of convergence of finite-dimensional distributions. 	
\end{theorem}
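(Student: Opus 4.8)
The plan is to establish convergence of the finite-dimensional distributions through the Cram\'er--Wold device: for fixed $t_1,\dots,t_m>0$ and arbitrary reals $a_1,\dots,a_m$ it suffices to prove that the linear combination $S_r:=\sum_{k=1}^m a_k X_r^{(c)}(t_k)$ converges in distribution to a centred normal random variable with variance $\sum_{k,l}a_k a_l B_H(t_k,t_l)$, since this is precisely the variance of the Gaussian vector $\sum_k a_k B_H(t_k)$ and a centred Gaussian law is determined by its covariance.

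First I would rewrite $S_r$ as a single weighted linear functional of the input $\xi$. Putting $h_r(u):=\sum_{k=1}^m a_k\mathbf{1}_{[0,rt_k]}(u)$ and applying Fubini's theorem to the definition $V(u)=\int_{\mathbb{R}}G(u-s)\xi(s)\,ds$, one obtains
\[
r^{H}\mathcal{L}(r)\,S_r=\int_{\mathbb{R}}h_r(u)V(u)\,du=\int_{\mathbb{R}}\Big(\int_{\mathbb{R}}h_r(u)G(u-s)\,du\Big)\xi(s)\,ds=\int_{\mathbb{R}}\widetilde h_r(s)\,\xi(s)\,ds,
\]
so that $S_r$ is, up to the deterministic normalisation, a linear functional of $\xi$ with kernel $\widetilde h_r=h_r\ast G(-\,\cdot\,)$, which lies in $L^2(\mathbb{R})$ because $h_r$ is compactly supported and $\int_{\mathbb{R}}G^2<\infty$.

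The second step, which I expect to be routine, is the variance asymptotics. Using the wide-sense stationarity of $V$ (hence of its integrated increments) together with the hypothesis $\operatorname{Var}\int_0^{\rho}V=\rho^{2H}\mathcal{L}^2(\rho)$ applied with $\rho=rt$, $\rho=rs$ and $\rho=r(t-s)$, the elementary polarisation identity $\operatorname{Cov}(A,B)=\tfrac12(\operatorname{Var}A+\operatorname{Var}B-\operatorname{Var}(A-B))$ gives, for $t>s>0$,
\[
\operatorname{Cov}\big(X_r^{(c)}(t),X_r^{(c)}(s)\big)=\frac{1}{2}\Big(t^{2H}\tfrac{\mathcal{L}^2(rt)}{\mathcal{L}^2(r)}+s^{2H}\tfrac{\mathcal{L}^2(rs)}{\mathcal{L}^2(r)}-(t-s)^{2H}\tfrac{\mathcal{L}^2(r(t-s))}{\mathcal{L}^2(r)}\Big).
\]
Since $\mathcal{L}$ is slowly varying, each ratio $\mathcal{L}^2(r\theta)/\mathcal{L}^2(r)\to1$ as $r\to\infty$ for fixed $\theta>0$, so the right-hand side converges to $B_H(t,s)=\tfrac12(t^{2H}+s^{2H}-|t-s|^{2H})$, and hence $\operatorname{Var}S_r\to\sum_{k,l}a_k a_l B_H(t_k,t_l)$, exactly the target variance.

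The main obstacle is the asymptotic \emph{normality} of $S_r$: convergence of the variance alone does not force a Gaussian limit, and the role played in Theorem~1 by the independence of the i.i.d.\ sequence $\{\xi_j\}$ must here be supplied by a central limit theorem for the linear functional $\int_{\mathbb{R}}\widetilde h_r(s)\xi(s)\,ds$. I would obtain this by a Lindeberg-type blocking argument: as $r\to\infty$ the renormalised kernel $\widetilde h_r(\cdot)/(r^H\mathcal{L}(r))$ spreads its $L^2$-mass over an expanding region with vanishing relative contribution from any fixed window, so that no single portion of $\xi$ dominates. Partitioning $\mathbb{R}$ into long blocks separated by short gaps and controlling the cross-block covariances reduces the characteristic function of $S_r$ to a product over asymptotically independent, individually negligible contributions, yielding asymptotic normality. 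Matching this Gaussian limit with the limiting covariance computed above identifies the limit law as that of $\sum_k a_k B_H(t_k)$, and the Cram\'er--Wold device then completes the proof.
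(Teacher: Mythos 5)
Your first two steps (rewriting $S_r$ as a linear functional of $\xi$ via Fubini, and the covariance asymptotics via polarisation, stationarity of $V$, and slow variation of $\mathcal{L}$) are sound. The gap is the third step, and it is not a technical one: the ``Lindeberg-type blocking argument'' cannot be carried out under the stated hypotheses. The process $\xi$ is only assumed to be stationary \emph{in the wide sense}, so the only structure available is second-order: you know covariances and nothing else. Controlling cross-block covariances, as you propose, does not make the blocks asymptotically independent — uncorrelatedness implies nothing about independence for non-Gaussian processes — so the characteristic function of $S_r$ does not factorise, and Lindeberg's method (which fundamentally requires independence, martingale differences, or a mixing condition) has nothing to run on. In fact no argument using only the stated second-moment assumptions can yield normality: take $\xi(s)=H_2(\zeta(s))=\zeta^2(s)-1$ with $\zeta$ a long-range dependent stationary Gaussian process and $G$ a nice kernel. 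Then $\xi$ is wide-sense stationary, mean-square continuous, centred, with finite variance, and $\operatorname{Var}X_r^{(c)}$ is regularly varying of index $2H$ with $H\in(\tfrac12,1)$, yet by the non-central limit theorem of Taqqu/Dobrushin--Major the normalised integral converges to a Rosenblatt-type (non-Gaussian) law. This shows that the Gaussian conclusion must come from structural input beyond the second-order hypotheses — in Theorem 1 that input is the i.i.d.\ property of $\{\xi_j\}$ — and your proof never imports any such input. (Note also that in the opposite extreme, if $\xi$ were assumed Gaussian, your whole third step would be unnecessary: $S_r$ is then a linear functional of a Gaussian process, hence exactly Gaussian, and the covariance convergence of your second step already finishes the proof.)

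The paper does not attempt a direct proof at all: it deduces the continuous-time statement from the discrete-time Theorem 1 (Davydov's invariance principle, where the i.i.d.\ innovations supply the independence your argument lacks), invoking the discretisation/equivalence results of Leonenko and Taufer and of Alodat and Olenko to pass between $\sum_{s=1}^{[rt]}V(s)$ and $\int_0^{rt}V(s)\,ds$. If you want a self-contained proof along your lines, you must either add an assumption playing the role of independent innovations (e.g.\ that $\xi$ is generated by an i.i.d.-type or strongly mixing noise) or follow the paper's route and reduce to the discrete case.
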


The equivalence of the statements for the discrete and continuous time follows from the results in Leonenko and Taufer~\cite{leonenko2006weak} and Alodat and Olenko \cite{weak2017alodat}.   

It was Rossenblatt~\cite{Rosenblatt1979} (see also Major~\cite{major1981limit}, Taqqu~\cite{doukhan2002theory}) who first proved that for a discrete-time Gaussian stochastic process $\{\xi_{j},\ j\in\mathbb{Z}\}$, with zero mean and long-range dependence and the $\kappa$-th Hermite polynomials $H_{\kappa}$, the non-linear filtered process 
\[
V_{\kappa}(t)=\sum_{j\in\mathbb{Z}}G_{t-j}H_{\kappa}(\xi_{j}),
\] 
satisfies the non-central limit theorem, that is for some normalising $A_{r}$ it holds
\[
\frac{1}{A_{r}}\sum_{s=1}^{[rt]}V_{\kappa}(s)\stackrel{D}{\rightarrow}Y_{\kappa}(t),\ t>0,\ \text{as}\ r\to\infty,
\]
where $Y_{\kappa}(t)$ is a self-similar process with the Hurst parameter $H\in(0,1)$ (non-Gaussian, if $\kappa\geq2$).

The limit processes $Y_{\kappa}(t),\ t>0$, are given in terms of $\kappa$-fold Wiener-It{\^o} stochastic integrals, and are the fractional Brownian motions with the Hurst parameter $H\in(0,1)$ if $\kappa=1$. 

The aim of this paper is to give an extension of the results of Rossenblatt\cite{Rosenblatt1979}, Major~\cite{major1981limit}, Taqqu~\cite{doukhan2002theory} for the case of random fields. Motivated by the theory of renormalisation and homogenisation of solutions of randomly initialised partial differential equations (PDE) and fractional partial differential equations (FPDE) (see, e.g.~\cite{albeverio1994stratified,leonenko1998exact,liu2010scaling,leonenko1998scaling}), we study the asymptotic behaviour of integrals of the form 
\[
d_{r}^{-1}\int_{\Delta(rt^{1/n})}V(x)dx,\quad t\in[0,1],\ \text{as}\ r\rightarrow\infty,
\]
where $V(x),\ x\in\mathbb{R}^{n}$, is a random field, $\Delta\subset\mathbb{R}^{n}$ is an observation window and $d_{r}$ is a normalising factor. The case when the limit process is self-similar with parameter $H\in(0,1)$ is considered.

The parameter $H$ plays an important role in analysing stochastic processes and can be used for their classification. In particular, stochastic processes can be classified according to the range of $H$ to the Brownian motion $(H=0.5)$, a short-memory anti-persistent stochastic process $(H\in(0,0.5))$ and a long-memory stochastic process ($H\in(0.5,1)$). These three cases correspond to the three types of behaviour called  $\frac{1}{f}$ noise, ultraviolet and infrared catastrophes by Mandelbrot and Taqqu~\cite{taqqu1981self}. The literature shows a variety of limit theorems with asymptotics given by non-Gaussian self-similar processes that exhibit non-negative auto-correlation structures with parameter $H\in(0.5,1)$, see~\cite{taqqu1975weak,taqqu1979convergence,ivanov1989statistical,leonenko1999limit,olenko2013limit,olenko2010limit} and references therein. However, there are only few results where asymptotic processes have $H\in(0,0.5)$. In the case $H<0.5$ processes exhibit a negative dependence structure, which is useful in applied modelling of switching between high and low values. Also, such processes have interesting theoretical stochastic properties. For example, in this case the covariance is the Green function of a Markov process and the squared process is infinitely divisible, which is not true for the case $H>0.5$, see\cite{eisenbaum2005squared,eisenbaum2006characterization}.

The example of a non-Gaussian self-similar process with $H\in(0,0.5)$ was given by Rossenblatt~\cite{Rosenblatt1979} where the asymptotic of quadratic functions of a long-range Gaussian stationary sequence was investigated. The result was generalised in~\cite{major1981limit} for sums of non-linear functionals of Gaussian sequences. In this paper we extend these results in several directions for more general conditions and derive limit theorems for functionals of filtered random fields defined as the convolution
\begin{equation*}
V(x):=\int_{\mathbb{R}^{n}}G(\Vert y-x\Vert)S(\xi(y))dy,
\end{equation*}
where $G(\cdot)$, $S(\cdot)$ are non-random functions and $\xi(\cdot)$ is a long-range dependent random field.

In the limit theorems obtained in this paper the asymptotic processes have the self-similar parameters $H\in\left(\gamma(\Delta),1\right)$, where $\gamma(\Delta)\geq0$ depends on the geometry of the set $\Delta\subset\mathbb{R}^{n}$. In the one-dimensional case $d=1$, $\gamma(\Delta)=0$ which coincides with the known results in the literature.

The rest of the article is organised as follows. In Section 2 we outline the necessary background. In Section 3 we introduce assumptions and give auxiliary results from the spectral and correlation theory of random fields. In Section 4 we present main results on the asymptotic behaviour of functionals of filtered random fields. Examples are presented in Section~5.	
\section{Notations}
This section gives main definitions and notations that are used in this paper.

In what follows $|\cdot|$ and $\Vert \cdot \Vert$ are used for the Lebesque measure and the Euclidean distance in $\mathbb{R}^{n},\ n\geq 1$, respectively. The symbols $C$, $\epsilon$ and $\delta$ (with subscripts) will be used to denote constants that are not important for our discussion. Moreover, the same symbol may be used for different constants appearing in the same proof.
 \begin{definition}
	A real-valued function $h:[0,\infty)\rightarrow\mathbb{R}$ is homogeneous of degree $\beta$ if $h(ax)=a^{\beta}h(x)$ for all $a,\ x>0$.
\end{definition}
 \begin{definition}\label{def3}{\rm\cite{bingham1987regular}} A measurable function $\mathcal{L}:(0,\infty)\rightarrow (0,\infty)$ is slowly varying at infinity if for all $t>0$,
	$$\lim_{r\rightarrow \infty} \dfrac{\mathcal{L}(tr)}{\mathcal{L}(r)} =1.$$
\end{definition}

By the representation theorem~\cite[Theorem 1.3.1]{bingham1987regular}, there exists $C>0$ such that for all $r\geq C$ the function $\mathcal{L}(\cdot)$ can be written in the form
\begin{align*}
\mathcal{L}(r)=\exp\left(\zeta_{1}(r)+\int_{C}^{r}\dfrac{\zeta_{2}(u)}{u}du\right),
\end{align*}
where $\zeta_{1}(\cdot)$ and $\zeta_{2}(\cdot)$ are  such measurable and bounded functions that $\zeta_{2}(r)\rightarrow 0$ and $\zeta_{1}(r)\rightarrow C_{0}$, $\left(C_{0}<\infty\right)$, when $r\rightarrow\infty$.

If $\mathcal{L}(\cdot)$ varies slowly, then $r^{a}\mathcal{L}(r)\rightarrow\infty$, and $r^{-a}\mathcal{L}(r)\rightarrow 0$ for an arbitrary $a>0$ when $r\rightarrow\infty$, see Proposition 1.3.6~\cite{bingham1987regular}.
 \begin{definition}{\rm\cite{bingham1987regular}}
	A measurable function $g:(0,\infty)\rightarrow (0,\infty)$ is regularly varying at infinity, denoted $g(\cdot)\in R_{\tau}$, if there exists $\tau$ such that, for all $t>0$, it holds that
	\begin{align*}
	\lim_{r\rightarrow \infty} \dfrac{g(tr)}{g(r)} =t^{\tau}.
	\end{align*}
\end{definition}

\begin{theorem}\label{the1new}{\rm\cite[Theorem 1.5.3]{bingham1987regular}}
	Let $g(\cdot)\in R_{\tau}$, and choose $a\geq0$ so that $g$ is locally bounded on $[a,\infty)$. If $\tau>0$ then
	$$\sup_{a\leq t\leq x} g(t)\sim g(x)\ {\rm{and}}\ \inf_{t\geq x} g(t)\sim g(x),\quad x\rightarrow\infty.$$
	If $\tau<0$ then
	$$\sup_{t\geq x} g(t)\sim g(x)\ {\rm{and}}\ \inf_{a\leq t\leq x} g(t)\sim g(x),\quad x\rightarrow\infty.$$
\end{theorem}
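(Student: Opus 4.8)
The plan is to reduce everything to the Karamata representation recalled above for the slowly varying factor of $g$. Writing $g(x)=x^{\tau}\mathcal{L}(x)$ with $\mathcal{L}(\cdot)$ slowly varying, the representation theorem gives, for $t,x\ge C$,
\[
\frac{g(t)}{g(x)}=\Big(\frac{t}{x}\Big)^{\tau}\exp\!\Big(\zeta_{1}(t)-\zeta_{1}(x)+\int_{x}^{t}\frac{\zeta_{2}(u)}{u}\,du\Big).
\]
Since $\zeta_{1}(u)\to C_{0}$ and $\zeta_{2}(u)\to 0$, for every $\epsilon>0$ one can fix $X_{0}\ge C$ so large that $|\zeta_{1}(t)-\zeta_{1}(x)|$ is arbitrarily small and $\big|\int_{x}^{t}\zeta_{2}(u)u^{-1}\,du\big|\le\tfrac{\epsilon}{2}|\log(t/x)|$ for all $t,x\ge X_{0}$. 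This furnishes two tools at once: (i) on any compact $\lambda$-range $[\delta,A]\subset(0,\infty)$ one has $g(\lambda x)/g(x)\to\lambda^{\tau}$ uniformly; and (ii) the Potter-type envelope $g(t)/g(x)\le(1+\epsilon)\max\{(t/x)^{\tau+\epsilon},(t/x)^{\tau-\epsilon}\}$ together with the matching lower bound $g(t)/g(x)\ge(1-\epsilon)\min\{(t/x)^{\tau+\epsilon},(t/x)^{\tau-\epsilon}\}$. I will carry out the case $\tau>0$; the case $\tau<0$ follows by the same argument with the roles of $\sup$ and $\inf$ interchanged. Recall also that $g(x)\to\infty$ when $\tau>0$ and $g(x)\to0$ when $\tau<0$, as noted above.

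Consider first $\sup_{a\le t\le x}g(t)\sim g(x)$ for $\tau>0$. The lower bound is trivial, as $g(x)$ is itself one of the values in the supremum. For the upper bound, fix $\delta\in(0,1)$ and split $[a,x]=[a,X_{0}]\cup[X_{0},\delta x]\cup[\delta x,x]$. On the near block $[\delta x,x]$ apply tool (i) with $\lambda=t/x\in[\delta,1]$, so that $\sup_{t\in[\delta x,x]}g(t)/g(x)\to\sup_{\lambda\in[\delta,1]}\lambda^{\tau}=1$. On the far block $[X_{0},\delta x]$ use tool (ii): choosing $\epsilon\in(0,\tau)$ and noting $t/x\le\delta<1$, the dominant power is $(t/x)^{\tau-\epsilon}\le\delta^{\tau-\epsilon}$, whence $\sup_{t\in[X_{0},\delta x]}g(t)/g(x)\le(1+\epsilon)\delta^{\tau-\epsilon}$, which tends to $0$ as $\delta\downarrow0$. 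Finally, on the bounded block $[a,X_{0}]$ the ratio $\sup_{t\in[a,X_{0}]}g(t)/g(x)\to0$, because $g$ is locally bounded while $g(x)\to\infty$. Taking $\limsup_{x\to\infty}$ and then $\delta\downarrow0$ yields $\limsup_{x\to\infty}\sup_{a\le t\le x}g(t)/g(x)\le1$, and the equivalence follows.

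The identity $\inf_{t\ge x}g(t)\sim g(x)$ for $\tau>0$ is handled by the symmetric splitting $[x,\infty)=[x,Ax]\cup[Ax,\infty)$ for large $A$. The bound $\inf_{t\ge x}g(t)\le g(x)$ is again trivial. On $[x,Ax]$ tool (i) gives $\inf_{\lambda\in[1,A]}\lambda^{\tau}=1$, and on $[Ax,\infty)$ the Potter lower bound yields $g(t)/g(x)\ge(1-\epsilon)(t/x)^{\tau-\epsilon}\ge(1-\epsilon)A^{\tau-\epsilon}\ge1-\epsilon$, so the far block never lowers the infimum below $1-\epsilon$. Letting $\epsilon\downarrow0$ closes the argument. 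For $\tau<0$ the two displayed equivalences are obtained identically, now using $g(x)\to0$ on the bounded block and choosing $\epsilon$ with $\tau+\epsilon<0$ so that the relevant far-region powers either vanish (in the supremum) or blow up harmlessly (in the infimum).

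The step I expect to be the main obstacle is the \emph{far region}, where $\lambda=t/x$ is not confined to a fixed compact subset of $(0,\infty)$ and the uniform convergence of tool (i) is unavailable. It is precisely the sign of $\tau$, combined with the polynomial Potter envelope (ii), that controls this region and forces the extremum to be attained asymptotically at the endpoint $t=x$. The only remaining technical nuisance is the behaviour of $g$ on the bounded block near $a$; it is harmless because that block's contribution to the ratio tends to $0$ (for $\tau>0$) or tends to $+\infty$ and so never attains the relevant infimum (for $\tau<0$).
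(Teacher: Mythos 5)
The paper offers no proof of this statement: it is quoted verbatim, with attribution, from Bingham--Goldie--Teugels, \emph{Regular Variation}, Theorem 1.5.3, so there is no paper-internal argument to compare against. Your reconstruction is essentially the standard textbook proof of that theorem: extract from the Karamata representation both the uniform convergence $g(\lambda x)/g(x)\to\lambda^{\tau}$ on compact $\lambda$-sets and the Potter-type envelopes, then split the range of the extremum into a near block (where uniform convergence pins the ratio to $\lambda^{\tau}$, extremised at $\lambda=1$), a far block (where the sign of $\tau$ makes the Potter power decay or stay above $1-\epsilon$), and a bounded block handled by local boundedness. Your treatment of the case $\tau>0$ (both equivalences) and of the supremum equivalence for $\tau<0$ is complete and correct, and these happen to be exactly the assertions the paper actually invokes in the proof of its Theorem 4.

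There is, however, one step that does not go through as written: in the $\tau<0$ infimum case you dismiss the bounded block $[a,X_{0}]$ on the grounds that its contribution ``tends to $+\infty$'' since $g(x)\to0$. That requires $\inf_{a\le t\le X_{0}}g(t)>0$, which does not follow from local boundedness (an upper bound only) plus pointwise positivity: take $g\in R_{-1}$ with $g(t)=t^{-1}$ for $t\ge2$, $g(t)=(t-1)^{2}$ for $t\in(1,2)$ and $g(t)=1$ otherwise; this $g$ is positive, measurable and locally bounded on $[a,\infty)$ for $a\le1$, yet $\inf_{a\le t\le x}g(t)=0$ for all $x\ge2$, so the claimed equivalence fails for that choice of $a$. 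The defect is really inherited from the literal phrasing of the statement, and the standard repair is one line: enlarge $a$ so that $1/g$ is also locally bounded on $[a,\infty)$, which is always possible because the uniform convergence theorem forces a regularly varying function to be bounded away from zero on compact subsets of $[a,\infty)$ for suitable $a$; equivalently, deduce the $\tau<0$ infimum case by applying your $\tau>0$ supremum result to $1/g\in R_{-\tau}$, which is precisely how the cited source handles it. With that addition your proof is complete.
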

\begin{definition}
	The Hermite polynomials $H_{m}(x),\ m\geq 0$, are given by
	
	\begin{equation*}
	H_{m}(x):=(-1)^{m}\exp\left(\dfrac{x^{2}}{2}\right)\dfrac{d^{m}}{dx^{m}}\exp\left(-\dfrac{x^{2}}{2}\right).
	\end{equation*}
\end{definition}

The first few Hermite polynomials are
\[
H_{0}(x)=1,\  H_{1}(x)=x,\  H_{2}(x)=x^{2}-1,\ H_{3}(x)=x^{3}-3x.
\]

The Hermite polynomials $H_{m}(x),\ m\geq 0$, form a complete orthogonal system in the Hilbert space $L_{2}\left(\mathbb{R},\phi(\omega)d\omega\right)=\left\{S: \int_{\mathbb{R}}S^{2}(\omega)\phi(\omega)d\omega<\infty\right\}$, where 
$\phi(\omega)$ is the probability density function of the standard normal distribution.

An arbitrary function $S(\omega)\in L_{2}\left(\mathbb{R},\phi(\omega)d\omega\right)$ possesses the mean-square convergent expansion
\begin{equation}\label{eq2}
S(\omega)=\sum_{j=0}^{\infty}\dfrac{C_{j}H_{j}(\omega)}{j!},\quad C_{j}:=\int_{\mathbb{R}}S(\omega)H_{j}(\omega)\phi(\omega)d\omega.
 \end{equation}
 By Parseval's identity 
 \begin{equation*}
 \sum_{j=0}^{\infty}\dfrac{C_{j}^{2}}{j!}=\int_{\mathbb{R}}S^{2}(\omega)\phi(\omega)d\omega.
 \end{equation*}
 \begin{definition}\label{def1}{\rm\cite{taqqu1975weak}} Let $S(\omega)\in L_{2}\left(\mathbb{R},\phi(\omega)d\omega\right)$ and there exists an integer $\kappa\geqslant 1$, such that $C_{j}=0$ for all $0<j\leq \kappa-1$, but $C_{\kappa}\neq 0$. Then $\kappa$ is called the Hermite rank of $S(\cdot)$ and is denoted by $H rankS(\cdot).$
 \end{definition}

It is assumed that all random variables are defined on a fixed probability space $\left(\Omega,\mathfrak{F},\mathbb{P}\right)$. We consider a measurable mean-square continuous zero-mean homogeneous isotropic real-valued random field $\xi\left(x\right),\ x\in\mathbb{R}^{n}$, with the covariance function
\begin{equation*}
B\left(r\right):=\mathbb{E}\left(\xi(0)\xi(x)\right),\quad x\in\mathbb{R}^{n},\quad
r=\Vert x\Vert.
\end{equation*}

It is well known that there exists a bounded non-decreasing function $\Phi\left(u\right)$, $u\geq 0$, (see~\cite{ivanov1989statistical,yadrenko1983spectral}) such that
\begin{equation*}
B\left(r\right)=\int_{0}^{\infty}Y_{n}\left(ru\right)d\Phi\left(u\right),
\end{equation*}
where the function $Y_{n}\left(\cdot\right),\ n\geq 1,$ is defined by
\begin{equation*}
Y_{n}\left(u\right):=2^{(n-2)/2}\Gamma\left(\dfrac{n}{2}\right)J_{(n-2)/2}(u)u^{(2-n)/2},\quad u\geqslant 0,
\end{equation*}
where $J_{(n-2)/2}(\cdot)$ is the Bessel function of the first kind of order $(n-2)/2$, see~\cite{leonenko1999limit,yadrenko1983spectral}. The function $\Phi\left(\cdot\right)$ is called the isotropic spectral measure of the random field $\xi\left(x\right),\ x\in\mathbb{R}^{n}$.

\begin{definition}
	If there exists a function $f(u),\ u\in[0,\infty)$, such that
	\begin{equation*}
	u^{n-1}f(u)\in L_{1}([0,\infty)),\quad\Phi(u)=2\pi^{n/2}/\Gamma(n/2)\int_{0}^{u}z^{n-1}f(z)dz,
	\end{equation*}
	then the function $f(\cdot)$ is called the isotropic spectral density of the field $\xi\left(x\right)$.
\end{definition}

The field $\xi\left(x\right)$ with an absolutely continuous spectrum has the following isonormal spectral representation
\begin{equation}\label{eq2xi}
\xi\left(x\right)=\int_{\mathbb{R}^{n}}e^{i\langle\lambda,x \rangle}\sqrt{f(\Vert\lambda\Vert)}W(d\lambda),
\end{equation}
where $W(\cdot)$ is the complex Gaussian white noise random measure on $\mathbb{R}^{n}$, see~\cite{ivanov1989statistical,leonenko1999limit,yadrenko1983spectral}.

Note, that by (2.1.8)~\cite{leonenko1999limit} we get $\mathbb{E}\left(H_{m}(\xi(x))\right)=0$ and
\[
\mathbb{E}\left(H_{m_{1}}(\xi(x))H_{m_{2}}(\xi(y))\right)=\delta_{m_{1}}^{m_{2}}m_{1}!B^{m_{1}}(\Vert x-y\Vert),\quad x, y\in\mathbb{R}^{n},
\]
where $\delta_{m_{1}}^{m_{2}}$ is the Kronecker delta function.

\begin{definition}
	A random process $X(t),\ t>0$, is called self-similar
	with parameter $H>0$, if for any $a>0$ it holds $X(at)\stackrel{D}{=}a^{H}X(t)$.
\end{definition}

 If $X(t),\ t>0$, is a self-similar process with parameter $H>0$ such that $\mathbb{E}(X(t))=0$ and $\mathbb{E}(X^{2}(t))<\infty$, then $B(at,as)=a^{2H}B(t,s)$, see \cite{leonenko1999limit}.
 
 \section{Assumptions and auxiliary results}
 This section introduces assumptions and results from the spectral and correlation theory of random fields.
 
 \begin{assumption}\label{ass1}
 	Let $\xi(x),\ x\in\mathbb{R}^{n}$, be a homogeneous isotropic Gaussian random field with $\mathbb{E}\xi(x)=0$ and the covariance function $B(x)$, such that $B(0)=1$ and
 	$$B(x)=\mathbb{E}\left(\xi\left(0\right)\xi\left(x\right)\right)=\Vert x\Vert^{-\alpha}\mathcal{L}_{0}\left(\Vert x\Vert\right),\quad \alpha>0,$$\
 	where $\mathcal{L}_{0}\left(\Vert\cdot\Vert\right)$ is a function slowly varying at infinity.  
 \end{assumption}
 If $\alpha\in\left(0,n\right)$, then the covariance function $B(x)$ satisfying Assumption \ref{ass1} is not integrable, which corresponds to the long-range dependence case\cite{anh2015rate}.
 
 The notation $\Delta\subset\mathbb{R}^{n}$ will be used to denote a Jordan-measurable compact bounded set, such that $|\Delta|>0$, and $\Delta$ contains the origin in its interior. Let $\Delta(r),\ r>0$, be the homothetic image of the set $\Delta$, with the centre of homothety at the origin and the coefficient $r>0$, that is $\vert\Delta(r)\vert=r^{n}\vert\Delta\vert$ and $\Delta=\Delta(1)$.
 
 Let $S(\omega)\in L_{2}\left(\mathbb{R},\phi(\omega)d\omega\right)$ and denote the random variables $K_{\kappa}$ and $K_{r,\kappa}$ by
 $$K_{\kappa}:=\int_{\bigtriangleup(r)}S\left(\xi\left(x\right)\right)dx\quad {\rm{and}}\quad K_{r,\kappa}:=\dfrac{C_{\kappa}}{\kappa!}\int_{\bigtriangleup(r)}H_{\kappa}\left(\xi\left(x\right)\right)dx,$$
 where $C_{\kappa}$ is given by (\ref{eq2}).
 \begin{theorem}\label{theo1}{\rm\cite{leonenko2014sojourn}}
 	Suppose that $\xi\left(x\right),\ x\in\mathbb{R}^{n}$, satisfies Assumption {\rm\ref{ass1}} and $H rankS(\cdot)=\kappa\geq 1$. If a limit distribution exists for at least one of the random variables
 	$$\dfrac{K_{r}}{\sqrt{Var K_{r}}}\quad and\quad \dfrac{K_{r,\kappa}}{\sqrt{Var K_{r,\kappa}}},$$
 	then the limit distribution of the other random variable also exists, and the limit distributions coincide when $r\rightarrow \infty$. 
 \end{theorem}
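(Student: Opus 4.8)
The plan is to exploit the Hermite expansion of $S$ together with the orthogonality of Hermite polynomials under the Gaussian law, and to reduce the claim to a single variance--comparison estimate. After centering (so that $C_{0}=0$, which does not affect $K_{r,\kappa}$ since $\kappa\geq1$), Definition~\ref{def1} gives $S(\omega)=\sum_{j=\kappa}^{\infty}\frac{C_{j}}{j!}H_{j}(\omega)$ with mean-square convergence. Interchanging this sum with the integral over $\Delta(r)$ (justified by the mean-square convergence and the $L_{2}$-isometry) yields $K_{r}=\sum_{j=\kappa}^{\infty}K_{r,j}$, where $K_{r,j}:=\frac{C_{j}}{j!}\int_{\Delta(r)}H_{j}(\xi(x))\,dx$ and the term $j=\kappa$ is exactly $K_{r,\kappa}$. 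By the orthogonality relation $\mathbb{E}\!\left(H_{i}(\xi(x))H_{j}(\xi(y))\right)=\delta_{i}^{j}\,j!\,B^{j}(\|x-y\|)$ stated above, the summands $K_{r,j}$ are pairwise uncorrelated, so $\operatorname{Var}K_{r}=\sum_{j=\kappa}^{\infty}\operatorname{Var}K_{r,j}$ with $\operatorname{Var}K_{r,j}=\frac{C_{j}^{2}}{j!}\int_{\Delta(r)}\int_{\Delta(r)}B^{j}(\|x-y\|)\,dx\,dy$.

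The heart of the argument is the estimate
$$\frac{\operatorname{Var}(K_{r}-K_{r,\kappa})}{\operatorname{Var}K_{r,\kappa}}=\frac{\sum_{j=\kappa+1}^{\infty}\operatorname{Var}K_{r,j}}{\operatorname{Var}K_{r,\kappa}}\longrightarrow 0,\qquad r\to\infty,$$
in the long-range regime $0<\kappa\alpha<n$ relevant to Assumption~\ref{ass1}. To bound the numerator uniformly in $j$, I would use that $|B(\cdot)|\leq B(0)=1$, so $|B|^{j}\leq|B|^{\kappa+1}$ for every $j\geq\kappa+1$; combined with Parseval's bound $\sum_{j}C_{j}^{2}/j!<\infty$, this gives $\sum_{j\geq\kappa+1}\operatorname{Var}K_{r,j}\leq C\int_{\Delta(r)}\int_{\Delta(r)}|B(\|x-y\|)|^{\kappa+1}\,dx\,dy$. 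Passing to the scaled variables $x=ru,\ y=rv$ with $u,v\in\Delta$ and invoking Assumption~\ref{ass1}, the leading term satisfies $\operatorname{Var}K_{r,\kappa}\sim \frac{C_{\kappa}^{2}}{\kappa!}\,r^{2n-\kappa\alpha}\mathcal{L}_{0}^{\kappa}(r)\int_{\Delta}\int_{\Delta}\|u-v\|^{-\kappa\alpha}\,du\,dv$, where the double integral converges because $\kappa\alpha<n$; the uniform convergence theorem for slowly varying functions (via the representation theorem quoted after Definition~\ref{def3} and Theorem~\ref{the1new}) justifies replacing $\mathcal{L}_{0}^{\kappa}(r\|u-v\|)$ by $\mathcal{L}_{0}^{\kappa}(r)$. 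The same change of variables shows the bounding integral of $|B|^{\kappa+1}$ is of order $r^{2n-(\kappa+1)\alpha}\mathcal{L}_{0}^{\kappa+1}(r)$ when $(\kappa+1)\alpha<n$, and of order $r^{n}$ (up to a slowly varying factor) once $(\kappa+1)\alpha\geq n$, since then $|B|^{\kappa+1}$ is integrable over $\mathbb{R}^{n}$ and the double integral grows like $|\Delta(r)|\int_{\mathbb{R}^{n}}|B(\|z\|)|^{\kappa+1}\,dz$. In either case the ratio is $O(r^{-\alpha}\mathcal{L}_{0}(r))$ or $O(r^{-(n-\kappa\alpha)})$, which tends to $0$.

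With the variance estimate in hand, I would finish by a Slutsky-type argument. Orthogonality gives $\operatorname{Var}K_{r}=\operatorname{Var}K_{r,\kappa}+\operatorname{Var}(K_{r}-K_{r,\kappa})$, whence $\operatorname{Var}K_{r}/\operatorname{Var}K_{r,\kappa}\to 1$. Decomposing
$$\frac{K_{r}}{\sqrt{\operatorname{Var}K_{r}}}-\frac{K_{r,\kappa}}{\sqrt{\operatorname{Var}K_{r,\kappa}}}=\frac{K_{r}-K_{r,\kappa}}{\sqrt{\operatorname{Var}K_{r}}}+\frac{K_{r,\kappa}}{\sqrt{\operatorname{Var}K_{r,\kappa}}}\left(\sqrt{\frac{\operatorname{Var}K_{r,\kappa}}{\operatorname{Var}K_{r}}}-1\right),$$
the first term has second moment $\operatorname{Var}(K_{r}-K_{r,\kappa})/\operatorname{Var}K_{r}\to 0$ and so vanishes in $L^{2}$; in the second term the normalised variable $K_{r,\kappa}/\sqrt{\operatorname{Var}K_{r,\kappa}}$ has unit variance and is therefore tight, while the deterministic factor tends to $0$, so the product vanishes in probability. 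Hence the difference of the two normalised variables converges to $0$ in probability, and Slutsky's theorem yields that if either variable converges in distribution then so does the other, to the same limit, establishing the equivalence in both directions.

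The step I expect to be the main obstacle is the variance-comparison estimate: although $|B|\leq 1$ conveniently collapses the infinite Hermite tail into a single integral of $|B|^{\kappa+1}$, verifying that this integral is of genuinely smaller order than the leading $|B|^{\kappa}$ integral requires the careful regular-variation asymptotics of the double integrals over $\Delta(r)$, including the split between the long-range case $(\kappa+1)\alpha<n$ and the integrable case $(\kappa+1)\alpha\geq n$, together with a uniform handling of the slowly varying factor near the diagonal $u=v$.
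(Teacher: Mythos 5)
Your proposal is correct, and it is essentially the standard proof of this result: note that the paper itself offers no proof of Theorem~\ref{theo1} --- it is quoted verbatim from Leonenko and Olenko~\cite{leonenko2014sojourn} --- and the argument in that source is exactly the reduction principle you describe (Hermite expansion of $S$, orthogonality of the chaoses giving $\operatorname{Var}K_{r}=\sum_{j\geq\kappa}\operatorname{Var}K_{r,j}$, the bound $|B|^{j}\leq |B|^{\kappa+1}$ collapsing the tail into one integral, regular-variation asymptotics of the double integrals over $\Delta(r)$, and a Slutsky-type conclusion). Two of your side remarks are in fact necessary repairs of the statement as reproduced here: the restriction $\kappa\alpha<n$, which you correctly impose so that $\operatorname{Var}K_{r,\kappa}\asymp r^{2n-\kappa\alpha}\mathcal{L}_{0}^{\kappa}(r)$ dominates the tail, is implicit in the theorem (Assumption~\ref{ass1} alone allows $\kappa\alpha\geq n$, where the variance-comparison step genuinely fails), and is satisfied wherever the paper applies the result since $\alpha\in(0,n/\kappa)$ throughout; likewise the centering $C_{0}=0$ is needed for the uncentered normalisation $K_{r}/\sqrt{\operatorname{Var}K_{r}}$ in the statement to admit a limit at all.
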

By Theorem \ref{theo1} it is enough to study $K_{r,\kappa}$ to get asymptotic distributions of $K_{\kappa}$. Therefore, we restrict our attention only to $K_{r,\kappa}$.
\begin{assumption}\label{ass2}
	The random field $\xi\left(x\right),\ x\in\mathbb{R}^{n}$, has the isotropic spectral density
	\begin{equation*}
	f\left(\Vert \lambda\Vert\right)=c_{1}\left(n,\alpha\right)\Vert\lambda\Vert^{\alpha-n}\mathcal{L}\left(\dfrac{1}{\Vert  \lambda\Vert}\right),
	\end{equation*}
	where $\alpha\in(0,n),\ c_{1}\left(n,\alpha\right):=\Gamma\left(\frac{n-\alpha}{2}\right)/2^{\alpha}\pi^{n/2}\Gamma\left(\frac{\alpha}{2}\right),$ and $\mathcal{L}(\Vert\cdot\Vert)\sim \mathcal{L}_{0}(\Vert\cdot\Vert)$ is a locally bounded function which is slowly varying at infinity.
\end{assumption}
One can find more details on relations between Assumptions~\ref{ass1} and \ref{ass2} in \cite{anh2017rate, anh2015rate}.

The function $K_{\Delta}\left(x\right)$ will be used to denote the Fourier transform of the indicator function of the set $\Delta$, i.e.
\begin{equation}\label{eq3}
K_{\Delta}\left(x\right):=\int_{\Delta}e^{i\langle u,x \rangle}du,\quad x\in\mathbb{R}^{n}.
\end{equation}  
\begin{theorem}\label{theo2}{\rm\cite{leonenko2014sojourn}}
	Let $\xi\left(x\right),\ x\in\mathbb{R}^{n}$, be a homogeneous isotropic Gaussian random field. If Assumptions {\rm\ref{ass1}} and {\rm\ref{ass2}} hold, then for $r \rightarrow\infty$ the random variables 
	$$X_{r,\kappa}(\Delta):=r^{\kappa\alpha/2-n}\mathcal{L}^{-\kappa/2}(r)\int_{\Delta(r)}H_{\kappa}\left(\xi(x)\right)dx$$\
	converge weakly to  
	\[
	X_{\kappa}(\Delta):=c_{1}^{\kappa/2}(n,\alpha)\int_{\mathbb{R}^{n\kappa}}^{\prime}K_{\Delta}\left(\lambda_{1}+\cdots+\lambda_{\kappa}\right)\dfrac{W(d\lambda_{1})\cdots W(d\lambda_{\kappa})}{\Vert\lambda_{1}\Vert^{(n-\alpha)/2}\cdots\Vert\lambda_{\kappa}\Vert^{(n-\alpha)/2}}.
	\]
\end{theorem}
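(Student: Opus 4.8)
The plan is to write $X_{r,\kappa}(\Delta)$ as a single $\kappa$-fold Wiener--It\^o integral, rescale it by exploiting the self-similarity of the white noise $W$, and then pass to the limit at the level of the integral kernels. First I would combine the spectral representation \eqref{eq2xi} of $\xi$ with the It\^o/diagram formula for Hermite polynomials of a Gaussian stochastic integral (here $\mathbb{E}\xi^{2}(x)=B(0)=1$), which yields
\[
H_{\kappa}(\xi(x))=\int_{\mathbb{R}^{n\kappa}}^{\prime}e^{i\langle x,\lambda_{1}+\cdots+\lambda_{\kappa}\rangle}\prod_{j=1}^{\kappa}\sqrt{f(\Vert\lambda_{j}\Vert)}\,W(d\lambda_{1})\cdots W(d\lambda_{\kappa}).
\]
Integrating over $\Delta(r)$ and interchanging the spatial and stochastic integrals, the inner integral of the exponential produces exactly the Fourier transform $K_{\Delta(r)}(\lambda_{1}+\cdots+\lambda_{\kappa})$ from \eqref{eq3}. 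Since $\Delta(r)=r\Delta$, the change of variables $u=rv$ gives $K_{\Delta(r)}(\lambda)=r^{n}K_{\Delta}(r\lambda)$, so $\int_{\Delta(r)}H_{\kappa}(\xi(x))\,dx$ becomes a $\kappa$-fold integral with kernel $r^{n}K_{\Delta}(r\sum_{j}\lambda_{j})\prod_{j}\sqrt{f(\Vert\lambda_{j}\Vert)}$.

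Next I would rescale by the substitution $\lambda_{j}=\mu_{j}/r$. Using the self-similarity of the Gaussian white noise, $W(d(\mu/r))\stackrel{D}{=}r^{-n/2}W(d\mu)$ (the Lebesgue control measure scales as $|A/r|=r^{-n}|A|$), the $\kappa$-fold integral picks up a factor $r^{-n\kappa/2}$. Inserting the spectral density from Assumption~\ref{ass2}, $f(\Vert\mu_{j}\Vert/r)=c_{1}(n,\alpha)(\Vert\mu_{j}\Vert/r)^{\alpha-n}\mathcal{L}(r/\Vert\mu_{j}\Vert)$, and collecting the powers of $r$ (namely $r^{n}\cdot r^{-n\kappa/2}\cdot r^{\kappa(n-\alpha)/2}=r^{\,n-\kappa\alpha/2}$) shows that the normalising prefactor $r^{\kappa\alpha/2-n}\mathcal{L}^{-\kappa/2}(r)$ cancels exactly and gives the distributional identity
\[
X_{r,\kappa}(\Delta)\stackrel{D}{=}c_{1}^{\kappa/2}(n,\alpha)\int_{\mathbb{R}^{n\kappa}}^{\prime}K_{\Delta}\Big(\textstyle\sum_{j=1}^{\kappa}\mu_{j}\Big)\prod_{j=1}^{\kappa}\frac{\mathcal{L}^{1/2}(r/\Vert\mu_{j}\Vert)}{\mathcal{L}^{1/2}(r)\,\Vert\mu_{j}\Vert^{(n-\alpha)/2}}\,W(d\mu_{1})\cdots W(d\mu_{\kappa}).
\]

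I would then prove convergence at the level of kernels. By the It\^o isometry, mean-square convergence of these multiple integrals is equivalent to $L_{2}(\mathbb{R}^{n\kappa})$ convergence of the kernels $h_{r}$ to the limiting kernel $h(\mu)=K_{\Delta}(\sum_{j}\mu_{j})\prod_{j}\Vert\mu_{j}\Vert^{-(n-\alpha)/2}$. For fixed $\mu$ with every $\mu_{j}\neq0$, Definition~\ref{def3} yields $\mathcal{L}^{1/2}(r/\Vert\mu_{j}\Vert)/\mathcal{L}^{1/2}(r)\to1$, so $h_{r}\to h$ almost everywhere; an $L_{2}$-dominated convergence argument then upgrades this to $L_{2}$ convergence of the kernels, and hence to mean-square convergence of the integrals, which implies the asserted convergence in distribution $X_{r,\kappa}(\Delta)\stackrel{D}{\to}X_{\kappa}(\Delta)$.

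The main obstacle is this final passage to the limit inside the stochastic integral, which has two delicate ingredients. First, the limit kernel must be square-integrable: the singularities $\Vert\mu_{j}\Vert^{-(n-\alpha)}$ at the origin are integrable because $\alpha>0$, while the behaviour at infinity and near the set $\sum_{j}\mu_{j}=0$ is controlled through the Riesz-composition estimate, by which the $\kappa$-fold convolution of $\Vert\cdot\Vert^{-(n-\alpha)}$ is of order $\Vert\cdot\Vert^{-(n-\kappa\alpha)}$, so that $\int_{\mathbb{R}^{n}}|K_{\Delta}(\mu)|^{2}\Vert\mu\Vert^{-(n-\kappa\alpha)}\,d\mu<\infty$ precisely under the integrability restriction $\kappa\alpha<n$ (implicit in the statement and automatic when $\kappa=1$). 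Second, the ratio $\mathcal{L}^{1/2}(r/\Vert\mu_{j}\Vert)/\mathcal{L}^{1/2}(r)$ must be dominated uniformly in $r$; this is handled by Potter-type bounds obtained from the representation theorem quoted after Definition~\ref{def3}, which give, for any small $\delta>0$ and all large $r$, the estimate $\mathcal{L}(r/\Vert\mu_{j}\Vert)/\mathcal{L}(r)\leq C\max(\Vert\mu_{j}\Vert^{\delta},\Vert\mu_{j}\Vert^{-\delta})$, furnishing an $r$-independent $L_{2}$ majorant and thereby justifying dominated convergence.
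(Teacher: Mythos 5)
Your proposal is correct and is essentially the paper's own approach: Theorem~\ref{theo2} is quoted from \cite{leonenko2014sojourn} without proof, but your chain of steps --- the It\^{o} representation as in \eqref{eq9new}, stochastic Fubini, the scaling $K_{\Delta(r)}(\lambda)=r^{n}K_{\Delta}(r\lambda)$, the substitution $\lambda_{j}=\mu_{j}/r$ with white-noise self-similarity, and $L_{2}$-convergence of kernels via pointwise convergence of the slowly varying ratios dominated by Potter-type bounds together with the integrability lemma (the paper's Lemma~\ref{lem2} with $\beta=0$) --- is exactly the machinery the paper deploys to prove its generalisation, Theorem~\ref{theo4}. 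You also correctly identify the implicit restriction $\alpha\in\left(0,n/\kappa\right)$ needed for square-integrability of the limit kernel, which the quoted statement suppresses.
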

Here $\int_{\mathbb{R}^{n\kappa}}^{\prime}$ denotes the multiple Wiener-It{\^o} integral with respect to a Gaussian white noise measure, where the diagonal hyperplanes $\lambda_{i}=\pm\lambda_{j},\ i,j=1,\dots,\kappa,\ i\neq j$, are excluded from the domain of integration.

\begin{assumption}\label{ass2weighted}{\rm\cite{ivanov1989statistical}} 
	Let $\vartheta(x)=\vartheta(\Vert x\Vert)$ be a radial continuous function positive for $\Vert x\Vert>0$ and such that there exists
	\begin{align*}
	\tilde{C}:=\kappa!\lim_{r\rightarrow \infty}\int_{\Delta}\int_{\Delta}\dfrac{\vartheta(r\Vert x\Vert)\vartheta(r\Vert y\Vert)dxdy}{\vartheta^{2}(r)\Vert x-y\Vert^{\alpha\kappa}}\in\left(0,\infty\right),
	\end{align*}
	where $\alpha\in\left(0,n/\kappa\right)$.
\end{assumption}

Let $u(\Vert\lambda\Vert):=c_{1}\left(n,\alpha\right)\mathcal{L}\left(\dfrac{1}{\Vert \lambda\Vert}\right)$, where $\mathcal{L}(\cdot)$ is from Assumption \ref{ass2}. In~\cite{leonenko1999limit} and Section 10.2~\cite{ivanov1989statistical} the case when the function $u(\Vert\lambda\Vert)$ is continuous in a neighborhood of zero, bounded on $ (0,\infty) $  and $u(0)\neq0$, was studied. It was assumed that there is a function $\bar{\vartheta}(\Vert x\Vert)$ such that
\begin{align*}
\int_{\mathbb{R}^{n\kappa}}\prod_{j=1}^{\kappa}\Vert \lambda_{j}\Vert^{\alpha-n}\bigg\vert\int_{\Delta(t^{1/n})}e^{i\langle \lambda_{1}+\dots+\lambda_{\kappa},x \rangle}\bar{\vartheta}(x)dx\bigg\vert^{2}\prod_{j=1}^{\kappa}d\lambda_{j}<\infty
\end{align*}
and
\begin{align*}
&\lim_{r\rightarrow \infty}\int_{\mathbb{R}^{n\kappa}}\bigg\vert\int_{\Delta(t^{1/n})}e^{i\langle \lambda_{1}+\dots+\lambda_{\kappa},x \rangle}
	\left(\dfrac{\vartheta(r\Vert x \Vert)}{\vartheta(r)}\prod_{j=1}^{\kappa}\sqrt{\dfrac{u(\Vert\lambda_{j}\Vert r^{-1})}{u(0)}}-\bar{\vartheta}(x)\right)dx\bigg\vert^{2}\\
&\times\prod_{j=1}^{\kappa}\Vert \lambda_{j}\Vert^{\alpha-n}\prod_{j=1}^{\kappa}d\lambda_{j}=0,
\end{align*}
for all $t\in[0,1]$.

Under these assumptions the following result was obtained.
\begin{theorem}\label{theo3}{\rm\cite{ivanov1989statistical}}
	If Assumption {\rm\ref{ass2weighted}} holds, then the finite-dimensional distributions of the random processes
	\begin{align}\label{eq6new}
	Y_{r,\kappa}(t):=\left(r^{n-\kappa\alpha/2}\vartheta(r)\sqrt{\tilde{C}c_{1}^{\kappa}(n,\alpha)u^{\kappa}(0)}\right)^{-1}\int_{\Delta(rt^{1/n})}\vartheta(\Vert x\Vert)H_{\kappa}\left(\xi(x)\right)dx
	\end{align}
	
	converge weakly to finite-dimensional distributions of the processes  
	$$Y_{\kappa}(t):=\dfrac{1}{\sqrt{\tilde{C}c_{1}^{\kappa}(n,\alpha)}}\int_{\mathbb{R}^{n\kappa}}^{\prime}K_{\Delta(t^{1/n})}\left(\lambda_{1}+\cdots+\lambda_{\kappa};\bar{\vartheta}\right)\dfrac{\prod_{j=1}^{\kappa} W(d\lambda_{j})}{\prod_{j=1}^{\kappa}\Vert\lambda_{j}\Vert^{(n-\alpha)/2}},$$	
	as $r\to\infty$, where
	$\alpha\in\left(0,\min\left(\frac{n}{\kappa},\frac{n+1}{2}\right)\right)$
	and
	$$
	K_{\Delta(t^{1/n})}\left(\lambda;\bar{\vartheta}\right):=\int_{\Delta(t^{1/n})}e^{i\langle \lambda,x \rangle}\bar{\vartheta}(x)dx.
	$$
\end{theorem}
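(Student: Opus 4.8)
The plan is to realise the normalised functional as a single multiple Wiener--It\^o integral, reduce the assertion to $L^2$-convergence of the integral kernels, and then recognise that this convergence is exactly what the two displayed hypotheses preceding the theorem provide. First I would invoke the isonormal spectral representation~(\ref{eq2xi}) together with the normalisation $B(0)=1$ from Assumption~\ref{ass1}, which makes $e^{i\langle\lambda,x\rangle}\sqrt{f(\Vert\lambda\Vert)}$ a unit-norm element of $L^2$, so that the It\^o formula for Hermite polynomials gives
\[
H_{\kappa}(\xi(x))=\int_{\mathbb{R}^{n\kappa}}^{\prime}e^{i\langle\lambda_{1}+\cdots+\lambda_{\kappa},\,x\rangle}\prod_{j=1}^{\kappa}\sqrt{f(\Vert\lambda_{j}\Vert)}\,\prod_{j=1}^{\kappa}W(d\lambda_{j}).
\]
A stochastic Fubini argument then interchanges the deterministic integration over $\Delta(rt^{1/n})$ against the weight $\vartheta(\Vert x\Vert)$ with the stochastic integration, writing $\int_{\Delta(rt^{1/n})}\vartheta(\Vert x\Vert)H_{\kappa}(\xi(x))\,dx$ as a $\kappa$-fold integral with kernel $\big(\int_{\Delta(rt^{1/n})}\vartheta(\Vert x\Vert)e^{i\langle\sum_{j}\lambda_{j},x\rangle}dx\big)\prod_{j}\sqrt{f(\Vert\lambda_{j}\Vert)}$.

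Next I would carry out the change of variables $\lambda_{j}\mapsto\lambda_{j}/r$ in the stochastic integral together with $x\mapsto rx$ in the inner deterministic integral. Using the self-similarity of white noise, $\int^{\prime}h(\lambda)\prod_{j}W(d\lambda_{j})\stackrel{D}{=}r^{-n\kappa/2}\int^{\prime}h(\mu/r)\prod_{j}W(d\mu_{j})$, and the explicit density of Assumption~\ref{ass2}, namely $f(\Vert\lambda\Vert/r)=r^{n-\alpha}\Vert\lambda\Vert^{\alpha-n}u(\Vert\lambda\Vert/r)$ with $u(\Vert\lambda\Vert)=c_{1}(n,\alpha)\mathcal{L}(1/\Vert\lambda\Vert)$, the accumulated powers of $r$ reduce to exactly $r^{n-\kappa\alpha/2}$, which cancels the normalising factor. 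This identifies $Y_{r,\kappa}(t)\stackrel{D}{=}\frac{1}{\sqrt{\tilde{C}c_{1}^{\kappa}(n,\alpha)}}\int_{\mathbb{R}^{n\kappa}}^{\prime}Q_{r}\,\prod_{j}W(d\lambda_{j})$ with
\[
Q_{r}(\lambda_{1},\dots,\lambda_{\kappa})=\Big(\int_{\Delta(t^{1/n})}\frac{\vartheta(r\Vert x\Vert)}{\vartheta(r)}\prod_{j=1}^{\kappa}\sqrt{\frac{u(\Vert\lambda_{j}\Vert r^{-1})}{u(0)}}\,e^{i\langle\sum_{j}\lambda_{j},x\rangle}dx\Big)\prod_{j=1}^{\kappa}\Vert\lambda_{j}\Vert^{(\alpha-n)/2},
\]
whereas the limit process $Y_{\kappa}(t)$ has kernel $Q=K_{\Delta(t^{1/n})}(\sum_{j}\lambda_{j};\bar{\vartheta})\prod_{j}\Vert\lambda_{j}\Vert^{(\alpha-n)/2}$.

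At this point the two conditions stated just before the theorem are precisely the $L^2$-facts needed: the first guarantees $Q\in L^2(\mathbb{R}^{n\kappa})$, and the second is exactly $\Vert Q_{r}-Q\Vert_{L^2(\mathbb{R}^{n\kappa})}^{2}\to0$. By the isometry of multiple Wiener--It\^o integrals this yields $\mathbb{E}\,\vert Y_{r,\kappa}(t)-Y_{\kappa}(t)\vert^{2}\to0$ for each fixed $t$, while Assumption~\ref{ass2weighted} supplies the constant $\tilde{C}$ that makes the limiting second moment finite and positive. To pass from one-dimensional to finite-dimensional distributions I would apply the same representation to an arbitrary linear combination $\sum_{i}a_{i}Y_{r,\kappa}(t_{i})$; by linearity this is a single $\kappa$-fold integral with kernel $\sum_{i}a_{i}Q_{r}^{(t_{i})}$, which converges in $L^2$ to $\sum_{i}a_{i}Q^{(t_{i})}$, and since $L^2$-convergence inside a fixed Wiener chaos implies convergence in distribution, the Cram\'er--Wold device completes the argument.

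I expect the principal difficulty to lie in the scaling and integrability bookkeeping rather than in the limit passage itself, which is essentially handed to us by the displayed hypotheses. The delicate points are justifying the stochastic Fubini interchange through an $L^2$-bound on the kernel and verifying that the powers of $r$ contributed by the white-noise scaling, by the spectral density, and by the inner substitution combine to cancel the normaliser exactly. The role of the two range restrictions should be made explicit here: $\alpha<n/\kappa$ guarantees $L^2$-integrability of both kernels, since it renders the $\kappa$-fold convolution of the singularities $\Vert\cdot\Vert^{\alpha-n}$ locally integrable, whereas the complementary bound $\alpha<(n+1)/2$ is used to control the high-frequency range $\Vert\lambda_{j}\Vert\gtrsim r$, where the slowly varying factor $u(\Vert\lambda_{j}\Vert r^{-1})$ departs from $u(0)$, so that the limit in the approximating condition can be taken using the decay of $K_{\Delta(t^{1/n})}(\cdot;\bar{\vartheta})$ for the general Jordan-measurable domain and weight. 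Once both kernels are seen to live in $L^2(\mathbb{R}^{n\kappa})$, the remainder is the routine isometry argument, exactly as in the unweighted case of Theorem~\ref{theo2}.
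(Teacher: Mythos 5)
The paper does not actually prove this statement---it is imported verbatim from Ivanov and Leonenko \cite{ivanov1989statistical}---but your blind reconstruction is correct and coincides with the standard argument, which is also precisely the skeleton the paper uses for its own Theorem~\ref{theo4}: the It\^o formula~(\ref{eq9new}) plus stochastic Fubini to realise the normalised functional as a single $\kappa$-fold Wiener--It\^o integral, the scaling $\lambda_{j}\mapsto\lambda_{j}/r$ with white-noise self-similarity to cancel $r^{n-\kappa\alpha/2}\vartheta(r)$ exactly, and the isometry reducing weak convergence to the two displayed $L^{2}$ conditions, which are indeed nothing but $\Vert Q_{r}-Q\Vert_{L^{2}(\mathbb{R}^{n\kappa})}\to 0$ in your notation. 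Your Cram\'er--Wold step is a harmless variant of what is done there: after the common change of variables all kernels live in the same chaos over the same noise, so joint $L^{2}$ convergence of the kernels already gives the finite-dimensional statement, exactly as in the paper's proof of Theorem~\ref{theo4}.
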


\section{Limit theorems for functionals of filtered fields}
This section derives the generalisation of Theorem~\ref{theo3} when the integrand $\vartheta(\cdot)H_{\kappa}\left(\cdot\right)$ in~(\ref{eq6new}) is replaced by a filtered random field.

\begin{assumption}\label{ass4}
	Let $h:[0,\infty)\rightarrow\mathbb{R}$ be a measurable real-valued homogeneous function of degree~$\beta$ and $g:[0,\infty)\rightarrow\mathbb{R}$ be a bounded uniformly continuous function such that $g(0)\neq 0$ in some neighberhood of zero and $\int_{\mathbb{R}^{n}}h^{2}(\Vert u\Vert)g^{2}(\Vert u\Vert)du<\infty$.
\end{assumption}
We define the filtered random field $V(x),\ x\in\mathbb{R}^{n}$, as
\begin{equation}\label{eq5}
V(x)=\int_{\mathbb{R}^{n}}G(\Vert y-x\Vert)H_{\kappa}(\xi(y))dy=\int_{\mathbb{R}^{n}}G(\Vert y\Vert)H_{\kappa}(\xi(x+y))dy,
\end{equation}
where
\begin{equation}\label{eq6}
G(\Vert x\Vert):=\dfrac{1}{(2\pi)^{n}}\int_{\mathbb{R}^{n}}e^{-i\langle x,u\rangle}h(\Vert u\Vert)g(\Vert u\Vert)du
\end{equation}
is the Fourier transform of $h(\cdot)g(\cdot)$. 
\begin{remark}\label{rem1new}
	Note, that from the isonormal spectral representation~{\rm(\ref{eq2xi})} and the It{\^o} formula
	\begin{align}\label{eq9new}
	H_{\kappa}(\xi(x+y))=\int_{\mathbb{R}^{n\kappa}}^{\prime}e^{i\langle\lambda_{1}+\dots+\lambda_{\kappa},x+y\rangle}\prod_{j=1}^{\kappa}\sqrt{f(\Vert\lambda_{j}\Vert)}\prod_{j=1}^{\kappa}W(d\lambda_{j})
	\end{align}
	it follows that
	\begin{align*}
	&V(x)=\int_{\mathbb{R}^{n}}G(\Vert y\Vert)\int_{\mathbb{R}^{n\kappa}}^{\prime}e^{i\langle\lambda_{1}+\dots+\lambda_{\kappa},x+y\rangle}\prod_{j=1}^{\kappa}\sqrt{f(\Vert\lambda_{j}\Vert)}\prod_{j=1}^{\kappa}W(d\lambda_{j})dy\\
	&=\int_{\mathbb{R}^{n}}e^{i\langle\lambda_{1}+\dots+\lambda_{\kappa},x\rangle}\int_{\mathbb{R}^{n\kappa}}^{\prime}e^{i\langle\lambda_{1}+\dots+\lambda_{\kappa},y\rangle}G(\Vert y\Vert)dy\prod_{j=1}^{\kappa}\left(\sqrt{f(\Vert\lambda_{j}\Vert)}W(d\lambda_{j})\right)\\
	&=\int_{\mathbb{R}^{n\kappa}}^{\prime}e^{i\langle\lambda_{1}+\dots+\lambda_{\kappa},x\rangle}\hat{G}(\lambda_{1}+\dots+\lambda_{\kappa})\prod_{j=1}^{\kappa}\sqrt{f(\Vert\lambda_{j}\Vert)}\prod_{j=1}^{\kappa}W(d\lambda_{j}),
	\end{align*}
	where $\hat{G}(\cdot)$ is the Fourier transform of the function $G(\cdot)$ that is defined by~{\rm(\ref{eq6})} and the stochastic Fubini's theorem~{\rm\cite[Theorem 5.13.1]{peccati2011wiener}} was used to interchange the order of integration. 
	
	By~{\rm(\ref{eq6})} and Assumption~{\rm\ref{ass4}} the isonormal spectral representation of $V(x)$ is
	\begin{align*}
V(x)&=\int_{\mathbb{R}^{n\kappa}}^{\prime}e^{i\langle\lambda_{1}+\dots+\lambda_{\kappa},x\rangle}h(\Vert\lambda_{1}+\dots+\lambda_{\kappa}\Vert)g(\Vert\lambda_{1}+\dots+\lambda_{\kappa}\Vert)\\
&\times\prod_{j=1}^{\kappa}\sqrt{f(\Vert\lambda_{j}\Vert)}\prod_{j=1}^{\kappa}W(d\lambda_{j})=h(1)\int_{\mathbb{R}^{n\kappa}}^{\prime}e^{i\langle\lambda_{1}+\dots+\lambda_{\kappa},x\rangle}\Vert\lambda_{1}+\dots+\lambda_{\kappa}\Vert^{\beta}\\
&\times g(\Vert\lambda_{1}+\dots+\lambda_{\kappa}\Vert)\prod_{j=1}^{\kappa}\sqrt{f(\Vert\lambda_{j}\Vert)}\prod_{j=1}^{\kappa}W(d\lambda_{j}).
\end{align*}
Therefore, it follows that the covariance of $V(x)$ is 
\begin{align}\label{eq10new}
Cov(V(x),V(y))&=h^{2}(1)\int_{\mathbb{R}^{n\kappa}}e^{i\langle\lambda_{1}+\dots+\lambda_{\kappa},x-y\rangle}\Vert\lambda_{1}+\dots+\lambda_{\kappa}\Vert^{2\beta} \notag\\&\times g^{2}(\Vert\lambda_{1}+\dots+\lambda_{\kappa}\Vert)\prod_{j=1}^{\kappa}f(\Vert\lambda_{j}\Vert)d\lambda_{j}.
\end{align}

\end{remark}
\begin{remark}\label{rem1}
By the homogeneity of $h(\cdot)$ and Lemma {\rm3} in~{\rm\cite{leonenko2014sojourn}} it holds
$$\mathcal{I}_{1}(\alpha):=\int_{\mathbb{R}^{n}}|K_{\Delta}\left(\lambda\right)|^{2}\dfrac{h^{2}\left(\Vert\lambda\Vert\right)d\lambda}{\Vert\lambda\Vert^{n-\alpha}}=h^{2}(1)\int_{\mathbb{R}^{n}}|K_{\Delta}\left(\lambda\right)|^{2}\dfrac{d\lambda}{\Vert\lambda\Vert^{n-\alpha-2\beta}}<\infty,$$
for $\alpha\in\left(0,n-2\beta\right)$ and $\beta<n/2$.
\end{remark}

\begin{lemma}\label{lem2}
	If $\tau_{1},\dots,\tau_{\kappa},\ \kappa\geq 1$, are positive constants such that it holds\\
	$\sum_{i=1}^{\kappa}\tau_{i}<n-2\beta$ and $\beta<n/2$, then
	\begin{align*}
	\mathcal{I}_{\kappa}(\tau_{1},\dots,\tau_{\kappa}):&=\int_{\mathbb{R}^{n\kappa}}\big\vert K_{\Delta}\left(\lambda_{1}+\cdots+\lambda_{\kappa}\right)\big\vert^{2}\Vert\lambda_{1}+\cdots+\lambda_{\kappa}\Vert^{2\beta}\\
	&\times\dfrac{\prod_{j=1}^{\kappa} d\lambda_{j}}{\Vert\lambda_{1}\Vert^{n-\tau_{1}}\cdots\Vert\lambda_{\kappa}\Vert^{n-\tau_{\kappa}}}<\infty.
	\end{align*}
\end{lemma}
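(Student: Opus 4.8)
The plan is to integrate out the variables $\lambda_1,\dots,\lambda_\kappa$ one at a time while keeping their sum $w:=\lambda_1+\cdots+\lambda_\kappa$ fixed, thereby reducing $\mathcal{I}_\kappa$ to a one-dimensional integral of exactly the type treated in Remark~\ref{rem1}. Since the integrand is a product of $|K_\Delta(\cdot)|^2\ge0$ and real powers of Euclidean norms, it is nonnegative, so Tonelli's theorem justifies every interchange of the order of integration and no issue of conditional convergence arises. Writing $\psi_j(\lambda):=\Vert\lambda\Vert^{-(n-\tau_j)}$ and performing the volume-preserving change of variables that replaces $\lambda_\kappa$ by $w=\lambda_1+\cdots+\lambda_\kappa$, one obtains
\[
\mathcal{I}_\kappa(\tau_1,\dots,\tau_\kappa)=\int_{\mathbb{R}^n}|K_\Delta(w)|^2\,\Vert w\Vert^{2\beta}\,\bigl(\psi_1*\cdots*\psi_\kappa\bigr)(w)\,dw,
\]
where the $(\kappa-1)$-fold inner integral over the remaining variables is the convolution of the Riesz kernels $\psi_1,\dots,\psi_\kappa$ evaluated at $w$.

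The key step is to evaluate this convolution by iterating the Riesz composition formula: for $a,b>0$ with $a+b<n$ there is a finite constant $\mathrm{B}(a,b)$, expressed through Gamma functions, with
\[
\int_{\mathbb{R}^n}\frac{dy}{\Vert y\Vert^{n-a}\,\Vert w-y\Vert^{n-b}}=\frac{\mathrm{B}(a,b)}{\Vert w\Vert^{n-a-b}}.
\]
Applying it with $a=\tau_1,\ b=\tau_2$ gives $(\psi_1*\psi_2)(w)=\mathrm{B}(\tau_1,\tau_2)\Vert w\Vert^{-(n-\tau_1-\tau_2)}$, and convolving successively with $\psi_3,\dots,\psi_\kappa$ produces, after $\kappa-1$ steps,
\[
(\psi_1*\cdots*\psi_\kappa)(w)=C\,\Vert w\Vert^{-(n-T)},\qquad T:=\sum_{i=1}^{\kappa}\tau_i,
\]
with $C<\infty$. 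At each step one must check that the two exponents lie strictly between $0$ and $n$ and that their sum $a+b$ is below $n$; this reduces to requiring that every partial sum $\tau_1+\cdots+\tau_k$ be positive (clear, since $\tau_i>0$) and strictly less than $n$. The latter holds under the hypothesis $\sum_i\tau_i<n-2\beta$ (for $\beta\ge0$ one has $n-2\beta\le n$, so $T<n$ and hence all partial sums are below $n$), and verifying these ranges at every stage is the point that needs care.

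Substituting the evaluated convolution back yields
\[
\mathcal{I}_\kappa(\tau_1,\dots,\tau_\kappa)=C\int_{\mathbb{R}^n}\frac{|K_\Delta(w)|^2\,dw}{\Vert w\Vert^{\,n-T-2\beta}},
\]
which is precisely an integral of the form $\mathcal{I}_1(\cdot)$ considered in Remark~\ref{rem1}, now with $\alpha$ replaced by $T=\sum_{i=1}^{\kappa}\tau_i\in(0,n-2\beta)$; hence it is finite. I expect the main obstacle to be the rigorous justification of the iterated convolution step — establishing the Riesz composition identity, verifying the convergence of each intermediate potential, and checking that all exponents remain in the admissible ranges at every stage — whereas the concluding reduction to Remark~\ref{rem1} is then immediate.
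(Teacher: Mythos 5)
Your proposal is correct and follows essentially the same route as the paper: the paper's change of variables $\tilde{\lambda}_{\kappa-1}=\lambda_{\kappa-1}/\Vert u\Vert$ combined with the splitting into the regions $A_{1},A_{2},A_{3}$ is precisely an elementary derivation of the two-kernel Riesz composition bound $\int_{\mathbb{R}^{n}}\Vert y\Vert^{\tau_{\kappa-1}-n}\Vert u-y\Vert^{\tau_{\kappa}-n}dy\leq C\Vert u\Vert^{\tau_{\kappa-1}+\tau_{\kappa}-n}$ that you invoke, after which both arguments recurse in the same way down to $\mathcal{I}_{1}\left(\sum_{i=1}^{\kappa}\tau_{i}\right)$ and conclude by Remark~\ref{rem1}. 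The only differences are cosmetic: you quote the exact composition identity with its Gamma-function constant where the paper proves only the upper estimate by hand, and your explicit requirement that every partial sum $\tau_{1}+\cdots+\tau_{k}$ stay below $n$ (automatic when $\beta\geq0$) is exactly the condition the paper uses implicitly for the convergence of its tail integral $\int_{1/2}^{\infty}\hat{\rho}^{\tau_{\kappa-1}+\tau_{\kappa}-n-1}d\hat{\rho}$.
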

\begin{proof}
	For $\kappa=1$ we have $\tau_{1}\in(0,n-2\beta)$ and by Remark~\ref{rem1} we get the statement of the Lemma.
	
	For $\kappa>1$, let us use the change of variables $\tilde{\lambda}_{\kappa-1}=\lambda_{\kappa-1}/\Vert u\Vert$, where $u=\lambda_{\kappa}+\lambda_{\kappa-1}$. Then, by applying the recursive estimation routine we get
	\begin{align*}
	&\mathcal{I}_{\kappa}(\tau_{1},\dots,\tau_{\kappa})=\int_{\mathbb{R}^{n(\kappa-1)}}|K_{\Delta}\left(\lambda_{1}+\cdots+\lambda_{\kappa-2}+u\right)|^{2}\\
	&\times\int_{\mathbb{R}^{n}}\dfrac{\Vert\lambda_{1}+\cdots+\lambda_{\kappa-2}+u\Vert^{2\beta} d\lambda_{\kappa-1}}{\Vert\lambda_{\kappa-1}\Vert^{n-\tau_{\kappa-1}}\Vert u-\lambda_{\kappa-1}\Vert^{n-\tau_{\kappa}}}\dfrac{d\lambda_{1}\cdots d\lambda_{\kappa-2}du}{\Vert\lambda_{1}\Vert^{n-\tau_{1}}\cdots
		\Vert\lambda_{\kappa-2}\Vert^{n-\tau_{\kappa-2}}}
	\end{align*}  
	\begin{align}\label{eq7}
	&=\int\limits_{\mathbb{R}^{n}}\bigg( \int\limits_{\mathbb{R}^{n(\kappa-2)}}\dfrac{|K_{\Delta}\left(\lambda_{1}+\cdots+\lambda_{\kappa-2}+u\right)|^{2}\Vert\lambda_{1}+\cdots+\lambda_{\kappa-2}+u\Vert^{2\beta}\prod_{j=1}^{\kappa-2}d\lambda_{j}}{\Vert\lambda_{1}\Vert^{n-\tau_{1}}\cdots\Vert\lambda_{\kappa-2}\Vert^{n-\tau_{\kappa-2}}\Vert u\Vert^{n-\tau_{\kappa-1}-\tau_{\kappa}}}\notag\\
	&\times\int_{\mathbb{R}^{n}}\dfrac{d\tilde{\lambda}_{\kappa-1}}{\Vert\tilde{\lambda}_{\kappa-1}\Vert^{n-\tau_{\kappa-1}}\big\Vert \frac{u}{\Vert u\Vert}-\tilde{\lambda}_{\kappa-1}\big\Vert^{n-\tau_{\kappa}}}\bigg)du.
	\end{align}
	
	Note, that the second integrand in~(\ref{eq7}) is unbounded at $\Vert\tilde{\lambda}_{\kappa-1}\Vert=0$ and $\tilde{\lambda}_{\kappa-1}=u/\Vert u\Vert$ (in this case $\Vert\tilde{\lambda}_{\kappa-1}\Vert=1$). If we split $\mathbb{R}^{n}$ into the regions
	$
	A_{1}:=\{\tilde{\lambda}_{\kappa-1}\in\mathbb{R}^{n}:\Vert\tilde{\lambda}_{\kappa-1}\Vert<\frac{1}{2}\}$, $
	A_{2}:=\{\tilde{\lambda}_{\kappa-1}\in\mathbb{R}^{n}:\frac{1}{2}\leq\Vert\tilde{\lambda}_{\kappa-1}\Vert<\frac{3}{2}\}$, and $
	A_{3}:=\{\tilde{\lambda}_{\kappa-1}\in\mathbb{R}^{n}:\Vert\tilde{\lambda}_{\kappa-1}\Vert\geq\frac{3}{2}\}$ we get
	\begin{align*}
	&\int_{\mathbb{R}^{n}}\dfrac{d\tilde{\lambda}_{\kappa-1}}{\Vert\tilde{\lambda}_{\kappa-1}\Vert^{n-\tau_{\kappa-1}}\big\Vert \frac{u}{\Vert u\Vert}-\tilde{\lambda}_{\kappa-1}\big\Vert^{n-\tau_{\kappa}}}\leq\sup_{\tilde{\lambda}_{\kappa-1}\in A_{1}}\big\Vert \frac{u}{\Vert u\Vert}-\tilde{\lambda}_{\kappa-1}\big\Vert^{\tau_{\kappa}-n}\\
	&\times\int_{A_{1}}\Vert\tilde{\lambda}_{\kappa-1}\Vert^{\tau_{\kappa-1}-n}d\tilde{\lambda}_{\kappa-1}+\sup_{\tilde{\lambda}_{\kappa-1}\in A_{2}}\Vert\tilde{\lambda}_{\kappa-1}\Vert^{\tau_{\kappa-1}-n}\int_{A_{2}}\dfrac{d\tilde{\lambda}_{\kappa-1}}{\big\Vert\frac{u}{\Vert u\Vert}-\tilde{\lambda}_{\kappa-1}\big\Vert^{n-\tau_{\kappa}}}\\
	&+\int_{A_{3}}\big\Vert\tilde{\lambda}_{\kappa-1}\big\Vert^{\tau_{\kappa-1}-n}\vert\big\Vert\tilde{\lambda}_{\kappa-1}\big\Vert-1\vert^{\tau_{\kappa}-n}d\tilde{\lambda}_{\kappa-1}\leq\left(\dfrac{1}{2}\right)^{\tau_{\kappa}-n}\int_{0}^{1/2}\rho^{\tau_{\kappa-1}-1} d\rho\\
	&+\left(\dfrac{1}{2}\right)^{\tau_{\kappa-1}-n}\int_{A_{2}-\dfrac{u}{\Vert u\Vert}}\Vert\hat{\lambda}_{\kappa-1}\Vert^{\tau_{\kappa}-n}d\hat{\lambda}_{\kappa-1}+\int_{3/2}^{\infty}\rho^{\tau_{\kappa-1}-1}\left(\rho-1\right)^{\tau_{\kappa}-n}d\rho\\
	&\leq C+\left(\dfrac{1}{2}\right)^{\tau_{\kappa-1}-n}\int_{0}^{5/2}\rho^{\tau_{\kappa}-1} d\rho+\int_{1/2}^{\infty}\dfrac{d\hat{\rho}}{\hat{\rho}^{n+1-\tau_{\kappa}-\tau_{\kappa-1}}}=C<\infty,	
	\end{align*}
	where $A_{2}-\dfrac{u}{\Vert u\Vert}=\{\lambda\in\mathbb{R}^{n}:\lambda+\dfrac{u}{\Vert u\Vert}\in A_{2}\}\subset v_{n}\left(\frac{5}{2}\right)$, $v_{n}\left(r\right)$ is a $n$-dimensional ball with center $0$ and radius $r$. 
	
	Hence, by~(\ref{eq7}) and Remark~\ref{rem1}
	\begin{align}\label{eq*}
	\mathcal{I}_{\kappa}(\tau_{1},\dots,\tau_{\kappa})&\leq C\mathcal{I}_{\kappa-1}(\tau_{1},\dots,\tau_{\kappa-2},\tau_{\kappa-1}+\tau_{\kappa})\notag\\
	&\leq\cdots\leq C\mathcal{I}_{1}\left(\sum_{i=1}^{\kappa}\tau_{i}\right)\leq
	C\int_{\mathbb{R}^{n}}\dfrac{|K_{\Delta}\left( u\right)|^{2}du}{\Vert u\Vert^{n-\sum_{i=1}^{\kappa}\tau_{i}-2\beta}}<\infty,
	\end{align}
	which completes the proof.\qed
\end{proof}
\begin{lemma}\label{lem3}
	The following integral is finite
	$$\mathcal{J}_{\kappa}:=\int_{\mathbb{R}^{n\kappa}}\big\vert\hat{G}(\lambda_{1}+\dots+\lambda_{\kappa})\big\vert^{2}\prod_{i=1}^{\kappa}f(\Vert\lambda_{i}\Vert)d\lambda_{i}<\infty.$$
\end{lemma}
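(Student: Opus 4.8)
The plan is to collapse the $\kappa$-fold integral $\mathcal{J}_{\kappa}$ onto a single copy of $\mathbb{R}^{n}$, reusing the recursive scheme already carried out for $\mathcal{I}_{\kappa}$ in Lemma~\ref{lem2}. First I would make the integrand explicit. By Remark~\ref{rem1new} one has $\hat{G}(\lambda_{1}+\dots+\lambda_{\kappa})=h(1)\Vert\lambda_{1}+\dots+\lambda_{\kappa}\Vert^{\beta}g(\Vert\lambda_{1}+\dots+\lambda_{\kappa}\Vert)$, and by Assumption~\ref{ass2} $f(\Vert\lambda_{i}\Vert)=c_{1}(n,\alpha)\Vert\lambda_{i}\Vert^{\alpha-n}\mathcal{L}(1/\Vert\lambda_{i}\Vert)$, so that, writing $u=\lambda_{1}+\dots+\lambda_{\kappa}$,
\[
\mathcal{J}_{\kappa}=h^{2}(1)c_{1}^{\kappa}(n,\alpha)\int_{\mathbb{R}^{n\kappa}}\Vert u\Vert^{2\beta}g^{2}(\Vert u\Vert)\prod_{i=1}^{\kappa}\Vert\lambda_{i}\Vert^{\alpha-n}\mathcal{L}(1/\Vert\lambda_{i}\Vert)\,d\lambda_{i}.
\]
Note that by (\ref{eq10new}) this is exactly $\mathrm{Var}(V(x))$, so the lemma asserts that the field (\ref{eq5}) is well defined in $L_{2}(\Omega)$; equivalently $\mathcal{J}_{\kappa}=h^{2}(1)\int_{\mathbb{R}^{n}}\Vert u\Vert^{2\beta}g^{2}(\Vert u\Vert)(f^{*\kappa})(u)\,du$, the $\kappa$-fold convolution of $f$ tested against $\Vert u\Vert^{2\beta}g^{2}(\Vert u\Vert)$.

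The structure is identical to $\mathcal{I}_{\kappa}$ in Lemma~\ref{lem2}, the only differences being the slowly varying weights $\mathcal{L}(1/\Vert\lambda_{i}\Vert)$ and the factor $g^{2}(\Vert u\Vert)$ in place of $\vert K_{\Delta}(u)\vert^{2}$. I would therefore integrate out the variables one at a time exactly as in (\ref{eq7}): after the substitution $\tilde{\lambda}_{\kappa-1}=\lambda_{\kappa-1}/\Vert u'\Vert$ with $u'=\lambda_{\kappa-1}+\lambda_{\kappa}$, the inner integral becomes a Riesz-type composition. The key estimate I need is that, for $2\alpha<n$,
\[
\int_{\mathbb{R}^{n}}\dfrac{\mathcal{L}(1/\Vert\lambda\Vert)\,\mathcal{L}(1/\Vert u'-\lambda\Vert)}{\Vert\lambda\Vert^{n-\alpha}\Vert u'-\lambda\Vert^{n-\alpha}}\,d\lambda\leq C\,\Vert u'\Vert^{2\alpha-n}\tilde{\mathcal{L}}(1/\Vert u'\Vert),
\]
with $\tilde{\mathcal{L}}$ again slowly varying at infinity and $C$ independent of $u'$. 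This is proved by splitting $\mathbb{R}^{n}$ into the three regions $A_{1},A_{2},A_{3}$ of Lemma~\ref{lem2} and using Potter's bounds (from the representation theorem quoted after Definition~\ref{def3}) to control the two slowly varying factors uniformly on each region. Iterating the estimate $\kappa-1$ times merges all singularities into one and, provided $\kappa\alpha<n$, yields
\[
\mathcal{J}_{\kappa}\leq C\int_{\mathbb{R}^{n}}\Vert u\Vert^{2\beta+\kappa\alpha-n}g^{2}(\Vert u\Vert)\,\mathcal{L}^{*}(1/\Vert u\Vert)\,du
\]
for some slowly varying $\mathcal{L}^{*}$.

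It then remains to show this single integral is finite, which I would do by splitting $\mathbb{R}^{n}$ at $\Vert u\Vert=1$. On $\{\Vert u\Vert\leq1\}$ the integrand is, in polar coordinates, of order $\Vert u\Vert^{2\beta+\kappa\alpha-1}\mathcal{L}^{*}(1/\Vert u\Vert)$ times the bounded factor $g^{2}$; since $\mathcal{L}^{*}$ is slowly varying and $2\beta+\kappa\alpha>0$, Potter's bounds make it locally integrable at the origin. On $\{\Vert u\Vert>1\}$ the factor $\Vert u\Vert^{\kappa\alpha-n}\mathcal{L}^{*}(1/\Vert u\Vert)$ is bounded (it tends to $0$) because $\kappa\alpha<n$, so the tail is dominated by $C\int_{\mathbb{R}^{n}}\Vert u\Vert^{2\beta}g^{2}(\Vert u\Vert)\,du$, which is finite by Assumption~\ref{ass4}. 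Here it is worth stressing that Assumption~\ref{ass4} supplies the decay at infinity that in Lemma~\ref{lem2} came from $K_{\Delta}$, which is why only $\kappa\alpha<n$ (rather than $\kappa\alpha<n-2\beta$) is needed in the tail. Combining the two pieces gives $\mathcal{J}_{\kappa}<\infty$.

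The main obstacle is the propagation of the slowly varying functions through the repeated Riesz convolution. The clean identity $\int_{\mathbb{R}^{n}}\Vert\lambda\Vert^{-(n-\alpha)}\Vert u'-\lambda\Vert^{-(n-\alpha)}\,d\lambda=C\Vert u'\Vert^{2\alpha-n}$ is exact for pure powers, but here each factor carries a slowly varying weight evaluated at $1/\Vert\lambda\Vert$ and $1/\Vert u'-\lambda\Vert$, and one must verify that the outcome is again a power times a slowly varying factor with a constant uniform in $u'$. Controlling these weights region by region (small $\Vert\lambda\Vert$, $\lambda$ near $u'$, and large $\Vert\lambda\Vert$) via Potter's bounds, and checking that the constants do not accumulate over the $\kappa-1$ iterations, is the delicate part; the rest is the bookkeeping already performed in Lemma~\ref{lem2}.
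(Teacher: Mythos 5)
Your reduction hinges entirely on the displayed composition estimate $\int_{\mathbb{R}^{n}}\mathcal{L}(1/\Vert\lambda\Vert)\,\mathcal{L}(1/\Vert u'-\lambda\Vert)\,\Vert\lambda\Vert^{\alpha-n}\Vert u'-\lambda\Vert^{\alpha-n}\,d\lambda\leq C\Vert u'\Vert^{2\alpha-n}\tilde{\mathcal{L}}(1/\Vert u'\Vert)$, and that estimate is asserted rather than proved — and the tool you name for it cannot deliver it in all the regions where you need it. Potter-type bounds and Theorem~\ref{the1new} control a slowly varying function only for \emph{large} arguments, i.e.\ they control $\mathcal{L}(1/\Vert\lambda\Vert)$ only where $\Vert\lambda\Vert$ is small. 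In your analogue of the region $A_{3}$ of Lemma~\ref{lem2} (and wherever $\Vert u'-\lambda\Vert\to\infty$), the argument of $\mathcal{L}$ tends to $0^{+}$, where Assumption~\ref{ass2} guarantees only local boundedness and no regularity at all; slow variation at infinity says nothing there. In that range, convergence is really the integrability of $f$ at infinity — true because $f$ is a spectral density, so $u^{n-1}f(u)\in L_{1}([0,\infty))$ — a fact your scheme never invokes, and upgrading it to a bound with constant uniform in $u'$ and output again of the form ``power times slowly varying'' is a nontrivial Abelian-type lemma, not bookkeeping. Moreover, your route imposes hypotheses the lemma does not carry: you need $2\alpha<n$ at the first merge and ultimately $\kappa\alpha<n$ (note that the condition $\kappa\alpha<n-2\beta$ of Theorem~\ref{theo4} does \emph{not} imply $\kappa\alpha<n$ when $\beta<0$), and at the origin you use $2\beta+\kappa\alpha>0$ with no justification from the standing assumptions.

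The missing idea is that no asymptotics of $f$ are needed at all. You in fact wrote the key identity yourself — $\mathcal{J}_{\kappa}=h^{2}(1)\int_{\mathbb{R}^{n}}\Vert u\Vert^{2\beta}g^{2}(\Vert u\Vert)\,(f^{*\kappa})(u)\,du$, i.e.\ $\mathcal{J}_{\kappa}$ is $\vert\hat{G}\vert^{2}$ followed by $\kappa$ convolutions with $f$ — and then abandoned it for the hard route. The paper's proof is precisely the soft version: $\vert\hat{G}\vert^{2}=h^{2}g^{2}\in L_{1}(\mathbb{R}^{n})$ by Assumption~\ref{ass4} and (\ref{eq6}), $f\in L_{1}(\mathbb{R}^{n})$ because it is a spectral density, so Young's theorem gives $\vert\hat{G}\vert^{2}*f\in L_{1}(\mathbb{R}^{n})$, and iterating this $\kappa-1$ times collapses $\mathcal{J}_{\kappa}$ to $\int_{\mathbb{R}^{n}}\hat{G}_{\kappa-1}^{2}(\lambda)f(\Vert\lambda\Vert)\,d\lambda$ with $\hat{G}_{\kappa-1}^{2}\in L_{1}(\mathbb{R}^{n})$ — no Potter bounds, no Riesz composition, no side conditions beyond $\alpha\in(0,n)$. (Your condition $2\beta+\kappa\alpha>0$ does flag a genuine delicacy: for $\beta<0$ the integrand can blow up at the origin, and the final pointwise evaluation in the soft argument is exactly where this would surface; but within the lemma as stated it enters your argument as an unproved extra hypothesis, not a derived fact.) So the proposal, as written, has a genuine gap at its central step, and the conditional statement it would establish is narrower than the lemma the paper proves.
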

\begin{proof}
	As $f(\cdot)$ is an isotropic spectral density we can rewrite $\mathcal{J}_{\kappa}$ as
	\begin{align}\label{eq7b}
	\mathcal{J}_{\kappa}&=\int_{\mathbb{R}^{n(\kappa-1)}}\int_{\mathbb{R}^{n}}\big\vert\hat{G}((\lambda_{1}+\dots+\lambda_{\kappa-1})+\lambda_{\kappa})\big\vert^{2}f(\Vert-\lambda_{\kappa}\Vert)d\lambda_{\kappa}\prod_{i=1}^{\kappa-1}f(\Vert\lambda_{i}\Vert)d\lambda_{i}\notag\\
	&=\int_{\mathbb{R}^{n(\kappa-1)}}\int_{\mathbb{R}^{n}}\left(\big\vert\hat{G}\big\vert^{2}*f\right)(\lambda_{1}+\dots+\lambda_{\kappa-1})\prod_{i=1}^{\kappa-1}f(\Vert-\lambda_{i}\Vert)d\lambda_{i}.
	\end{align}
	Note that $\vert \hat{G}\vert^{2}(\cdot)\in L_{1}\left(\mathbb{R}^{n}\right)$ and $f(\cdot)\in L_{1}\left(\mathbb{R}^{n}\right)$. Hence, by Young's theorem~\cite{bogachev2007measure} it follows that $ \hat{G}_{1}^{2}(\cdot)=\left(\big\vert\hat{G}\big\vert^{2}*f\right)(\cdot)\in L_{1}\left(\mathbb{R}^{n}\right)$. Therefore, using convolutions as in~(\ref{eq7b}) we obtain
	\begin{align*}
	\mathcal{J}_{\kappa}&=\int_{\mathbb{R}^{n(\kappa-2)}}\int_{\mathbb{R}^{n}}\hat{G}_{1}^{2}(\lambda_{1}+\dots+\lambda_{\kappa-1})f(\Vert-\lambda_{\kappa-1}\Vert)d\lambda_{\kappa-1}\prod_{i=1}^{\kappa-2}f(\Vert-\lambda_{i}\Vert)d\lambda_{i}\notag\\
	&=\int_{\mathbb{R}^{n(\kappa-2)}}\int_{\mathbb{R}^{n}}\left(\hat{G}_{1}^{2}*f\right)(\lambda_{1}+\dots+\lambda_{\kappa-2})\prod_{i=1}^{\kappa-2}f(\Vert-\lambda_{i}\Vert)d\lambda_{i}\\
	&=\int_{\mathbb{R}^{n(\kappa-2)}}\int_{\mathbb{R}^{n}}\hat{G}_{2}^{2}(\lambda_{1}+\dots+\lambda_{\kappa-2})\prod_{i=1}^{\kappa-2}f(\Vert-\lambda_{i}\Vert)d\lambda_{i}=\cdots=\\
	&=\int_{\mathbb{R}^{n}}\hat{G}_{\kappa-1}^{2}(\lambda_{1})f(\Vert-\lambda_{1}\Vert)d\lambda_{1}<\infty,
	\end{align*}
	where $ \hat{G}_{j+1}^{2}(\cdot):=\left(\hat{G}_{j}^{2}*f\right)(\cdot)\in L_{1}\left(\mathbb{R}^{n}\right)$ by Young's theorem and recursive steps.\qed 
\end{proof}

Now we proceed to the main result.
\begin{theorem}\label{theo4}
	Let $\xi\left(x\right),\  x\in\mathbb{R}^{n}$, be a random field satisfying Assumptions~{\rm\ref{ass1}}, {\rm\ref{ass2}} and functions $g(\cdot)$ and $h(\cdot)$ satisfy Assumption~{\rm\ref{ass4}}. Then, for $r \rightarrow +\infty$ the finite-dimensional distributions of 
	$$X_{r,\kappa}(t):=\dfrac{r^{\beta+\kappa\alpha/2-n}\mathcal{L}^{-\kappa/2}(r)}{(2\pi)^{n}c_{1}^{\kappa/2}(n,\alpha)g(0)h(1)}\int_{\Delta(rt^{1/n})}V\left(x\right)dx,\quad t\in[0,1],$$ converge weakly to the finite-dimensional distributions of 
	$$X_{\kappa}(t):=t\int_{\mathbb{R}^{n\kappa}}^{\prime}K_{\Delta}\left(\left(\lambda_{1}+\dots+\lambda_{\kappa}\right) t^{1/n}\right)\dfrac{\Vert \lambda_{1}+\dots+\lambda_{\kappa}\Vert^{\beta}\prod_{j=1}^{\kappa}W(d\lambda_{j})}{\prod_{j=1}^{\kappa}\Vert\lambda_{j}\Vert^{(n-\alpha)/2}},$$\ 
	where
	$\alpha\in\left(0,\frac{n-2\beta}{\kappa}\right)$ and $\beta<\frac{n}{2}$.
\end{theorem}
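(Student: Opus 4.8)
The plan is to reduce $X_{r,\kappa}(t)$ to a single multiple Wiener--It{\^o} integral, rescale the spectral variables, and then establish $L^2(\mathbb{R}^{n\kappa})$-convergence of the integral kernels; since the Wiener--It{\^o} isometry turns $L^2$-convergence of symmetric kernels into $L^2(\Omega)$-convergence of the integrals, this immediately gives convergence in distribution. First I would use the spectral representation of $V(x)$ from Remark~\ref{rem1new} together with the stochastic Fubini theorem to interchange the deterministic window integral with the stochastic one, obtaining
\[
\int_{\Delta(rt^{1/n})}V(x)\,dx = h(1)\int_{\mathbb{R}^{n\kappa}}^{\prime} K_{\Delta(rt^{1/n})}(\lambda_1+\dots+\lambda_\kappa)\,\|\lambda_1+\dots+\lambda_\kappa\|^\beta\,g(\|\lambda_1+\dots+\lambda_\kappa\|)\prod_{j=1}^\kappa\sqrt{f(\|\lambda_j\|)}\,W(d\lambda_j).
\]

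Next I would perform the scaling substitution $\lambda_j=\mu_j/r$, exploiting three facts: the homothety identity $K_{\Delta(rt^{1/n})}(\mu/r)=r^n t\,K_\Delta(t^{1/n}\mu)$, the self-similarity of the complex Gaussian white noise $W(d(\mu_j/r))\stackrel{D}{=}r^{-n/2}W(d\mu_j)$, and Assumption~\ref{ass2}, which gives $f(\|\mu_j\|/r)=c_1(n,\alpha)(\|\mu_j\|/r)^{\alpha-n}\mathcal{L}(r/\|\mu_j\|)$. Collecting the accumulated powers of $r$ yields exactly $r^{\,n-\beta-\kappa\alpha/2}$, which cancels against the normalising factor $r^{\,\beta+\kappa\alpha/2-n}$, while the constants $h(1)$, $c_1^{\kappa/2}(n,\alpha)$ and $(2\pi)^n$ in the denominator absorb the corresponding constants produced by the computation. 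What remains is that $X_{r,\kappa}(t)$ equals, in distribution, the multiple Wiener--It{\^o} integral with kernel
\[
Q_r(\mu_1,\dots,\mu_\kappa;t):=t\,K_\Delta\!\left(t^{1/n}M_\kappa\right)\|M_\kappa\|^\beta\,\frac{g(\|M_\kappa\|/r)}{g(0)}\prod_{j=1}^\kappa\frac{\mathcal{L}^{1/2}(r/\|\mu_j\|)}{\mathcal{L}^{1/2}(r)\,\|\mu_j\|^{(n-\alpha)/2}},
\]
where $M_\kappa:=\mu_1+\dots+\mu_\kappa$.

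Pointwise convergence $Q_r\to Q$, where $Q$ is the kernel of the target $X_\kappa(t)$, is then immediate: $g(\|M_\kappa\|/r)/g(0)\to1$ by the uniform continuity of $g$ at the origin together with $g(0)\neq0$, and each ratio $\mathcal{L}^{1/2}(r/\|\mu_j\|)/\mathcal{L}^{1/2}(r)\to1$ directly from the definition of slow variation. That the limit kernel $Q$ belongs to $L^2(\mathbb{R}^{n\kappa})$ follows from Lemma~\ref{lem2} applied with $\tau_1=\dots=\tau_\kappa=\alpha$ (after the harmless rescaling $\nu_j=t^{1/n}\mu_j$ that removes the $t^{1/n}$ inside $K_\Delta$), since the hypothesis $\alpha\in(0,(n-2\beta)/\kappa)$ is precisely the condition $\sum_{i=1}^\kappa\tau_i=\kappa\alpha<n-2\beta$ required there.

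The hard part will be upgrading this pointwise convergence to convergence of $Q_r$ to $Q$ in $L^2(\mathbb{R}^{n\kappa})$, which is what the isometry needs. The factor $g(\|M_\kappa\|/r)/g(0)$ is bounded since $g$ is bounded, so the real difficulty is a uniform (in $r$) domination of the slowly varying factors $\mathcal{L}^{1/2}(r/\|\mu_j\|)/\mathcal{L}^{1/2}(r)$. Here I would invoke the representation theorem for slowly varying functions and Theorem~\ref{the1new} to obtain Potter-type bounds $\mathcal{L}(r/\|\mu_j\|)/\mathcal{L}(r)\le C\max(\|\mu_j\|^{\epsilon},\|\mu_j\|^{-\epsilon})$, valid uniformly for all large $r$, splitting $\mathbb{R}^n$ into $\{\|\mu_j\|\le1\}$ and $\{\|\mu_j\|>1\}$ to treat the two regimes. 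Because the inequality $\kappa\alpha<n-2\beta$ is strict, there is slack to absorb these perturbations: the dominating integrand is then controlled by $\mathcal{I}_\kappa(\tau_1,\dots,\tau_\kappa)$ with perturbed exponents $\tau_j=\alpha\pm\epsilon$ still satisfying $\sum_i\tau_i<n-2\beta$ for $\epsilon$ small enough, hence finite by Lemma~\ref{lem2}, and a dominated-convergence argument delivers $\|Q_r-Q\|_{L^2(\mathbb{R}^{n\kappa})}\to0$. Finally, to pass from one-dimensional marginals to all finite-dimensional distributions, I would run the same argument on an arbitrary linear combination $\sum_{i=1}^m a_i X_{r,\kappa}(t_i)$, which by linearity of the integral is again a single Wiener--It{\^o} integral with kernel $\sum_i a_i Q_r(\cdot;t_i)$; the $L^2$-convergence of this kernel to $\sum_i a_i Q(\cdot;t_i)$ yields the required joint convergence.
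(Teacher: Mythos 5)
Your proposal is correct and follows essentially the same route as the paper's proof: spectral representation of $V$ plus the stochastic Fubini theorem, the homothety identity $K_{\Delta(rt^{1/n})}(\lambda)=tr^{n}K_{\Delta}(\lambda rt^{1/n})$ with the scaling $\lambda^{(j)}=r\lambda_{j}$ and white-noise self-similarity, pointwise convergence of the kernel ratio, and a dominated-convergence argument where the slowly varying factors are controlled by Potter-type bounds from Theorem~\ref{the1new} on the regions indexed by $\{-1,1\}^{\kappa}$, with the perturbed exponents $\alpha\pm\delta$, $\delta<\min\{\alpha,\frac{n-2\beta}{\kappa}-\alpha\}$, handled by Lemma~\ref{lem2}. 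The only cosmetic difference is your final Cram\'{e}r--Wold step on linear combinations, which the paper renders unnecessary by realising $X_{r,\kappa}(t)$ and $X_{\kappa}(t)$ as integrals against the same white noise, so that $E\left(X_{r,\kappa}(t)-X_{\kappa}(t)\right)^{2}\to0$ for each $t$ already yields convergence of all finite-dimensional distributions.
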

\begin{remark}\label{rem2new}
	By the representation~{\rm(\ref{eq10new})} in Remark~{\rm\ref{rem1new}} of the covariance function of $V(x)$ we obtain
	\begin{align*}
	&Cov\left(X_{r,\kappa}(t),X_{r,\kappa}(s)\right)=\\
	&=\left(\dfrac{r^{\beta+\kappa\alpha/2-n}\mathcal{L}^{-\kappa/2}(r)}{(2\pi)^{n}c_{1}^{\kappa/2}(n,\alpha)g(0)h(1)}\right)^{2}\int_{\Delta(rt^{1/n})}\int_{\Delta(rs^{1/n})}Cov\left(V(x),V(y)\right)dxdy\\
	&=\left(\dfrac{r^{\beta+\kappa\alpha/2-n}\mathcal{L}^{-\kappa/2}(r)}{(2\pi)^{n}c_{1}^{\kappa/2}(n,\alpha)g(0)}\right)^{2}\int_{\Delta(rt^{1/n})}\int_{\Delta(rs^{1/n})}\int_{\mathbb{R}^{n\kappa}}e^{i\langle\lambda_{1}+\dots+\lambda_{\kappa},x-y\rangle}
	\\
	&\times \Vert\lambda_{1}+\dots+\lambda_{\kappa}\Vert^{2\beta}g^{2}(\Vert\lambda_{1}+\dots+\lambda_{\kappa}\Vert)\prod_{j=1}^{\kappa}f(\Vert\lambda_{j}\Vert)d\lambda_{j}dxdy\\
	&=\left(\dfrac{r^{\beta+\kappa\alpha/2-n}\mathcal{L}^{-\kappa/2}(r)}{(2\pi)^{n}c_{1}^{\kappa/2}(n,\alpha)g(0)}\right)^{2}\int_{\mathbb{R}^{n\kappa}}	\int_{\Delta(rt^{1/n})}\int_{\Delta(rs^{1/n})}e^{i\langle\lambda_{1}+\dots+\lambda_{\kappa},x-y\rangle}dxdy
	\\
	&\times \Vert\lambda_{1}+\dots+\lambda_{\kappa}\Vert^{2\beta}g^{2}(\Vert\lambda_{1}+\dots+\lambda_{\kappa}\Vert)\prod_{j=1}^{\kappa}f(\Vert\lambda_{j}\Vert)d\lambda_{j}\\
	&=\left(\dfrac{r^{\beta+\kappa\alpha/2}\mathcal{L}^{-\kappa/2}(r)}{(2\pi)^{n}c_{1}^{\kappa/2}(n,\alpha)g(0)}\right)^{2} ts\int_{\mathbb{R}^{n\kappa}}K_{\Delta}\left(\left(\lambda_{1}+\dots+\lambda_{\kappa}\right)rt^{1/n}\right)\prod_{j=1}^{\kappa}f(\Vert\lambda_{j}\Vert)\\
	&\times \overline{K_{\Delta}\left(\left(\lambda_{1}+\dots+\lambda_{\kappa}\right)rs^{1/n}\right)}\Vert\lambda_{1}+\dots+\lambda_{\kappa}\Vert^{2\beta}g^{2}(\Vert\lambda_{1}+\dots+\lambda_{\kappa}\Vert)\prod_{j=1}^{\kappa}d\lambda_{j}.
	\end{align*}
	In particular, the variance of $X_{r,\kappa}(t)$ is equal
	\begin{align*}
	&Var\left(X_{r,\kappa}(t)\right)=\left(\dfrac{r^{\beta+\kappa\alpha/2}\mathcal{L}^{-\kappa/2}(r)}{(2\pi)^{n}c_{1}^{\kappa/2}(n,\alpha)g(0)}\right)^{2}t^{2}\int_{\mathbb{R}^{n\kappa}}\Vert\lambda_{1}+\dots+\lambda_{\kappa}\Vert^{2\beta}\\
	&\times\bigg\vert K_{\Delta}\left(\left(\lambda_{1}+\dots+\lambda_{\kappa}\right)rt^{1/n}\right)\bigg\vert^{2}g^{2}(\Vert\lambda_{1}+\dots+\lambda_{\kappa}\Vert)\prod_{j=1}^{\kappa}f(\Vert\lambda_{j}\Vert)d\lambda_{j}.
	\end{align*}
	Similarly, we get
	\begin{align*}
	&Cov\left(X_{\kappa}(t),X_{\kappa}(s)\right)=ts\int_{\mathbb{R}^{n\kappa}}K_{\Delta}\left(\left(\lambda_{1}+\dots+\lambda_{\kappa}\right)t^{1/n}\right)\\
	&\times\overline{K_{\Delta}\left(\left(\lambda_{1}+\dots+\lambda_{\kappa}\right)s^{1/n}\right)}\Vert\lambda_{1}+\dots+\lambda_{\kappa}\Vert^{2\beta}\prod_{j=1}^{\kappa}\Vert\lambda_{j}\Vert^{\alpha-n}d\lambda_{j}
	\end{align*}
	and
	\begin{align*}
	Var\left(X_{\kappa}(t)\right)&=t^{2}\int_{\mathbb{R}^{n\kappa}}\big\vert K_{\Delta}\left(\left(\lambda_{1}+\dots+\lambda_{\kappa}\right)t^{1/n}\right)\big\vert^{2}\dfrac{\Vert\lambda_{1}+\dots+\lambda_{\kappa}\Vert^{2\beta}\prod_{j=1}^{\kappa}d\lambda_{j}}{\prod_{j=1}^{\kappa}\Vert\lambda_{j}\Vert^{n-\alpha}}.
	\end{align*}	
\end{remark}

\begin{remark}\label{rem2}
	Note, that for $a>0$ we have
	\begin{align*}
	X_{\kappa}(at)&=at\int_{\mathbb{R}^{n\kappa}}^{\prime}K_{\Delta}\left(\left(\lambda_{1}+\dots+\lambda_{\kappa}\right) (at)^{1/n}\right)\dfrac{\Vert \lambda_{1}+\dots+\lambda_{\kappa}\Vert^{\beta}\prod_{j=1}^{\kappa}W(d\lambda_{j})}{\prod_{j=1}^{\kappa}\Vert\lambda_{j}\Vert^{\frac{n-\alpha}{2}}}.
	\end{align*}
	Using the transformation $\tilde{\lambda}_{j}=a^{\frac{1}{n}}\lambda_{j},\ j=1,\dots,\kappa$, and the self-similarity of the Gaussian white noise we get
	\begin{align*}
	X_{\kappa}(at)&=\dfrac{a^{1-\frac{\beta}{n}}}{\left(a^{-\frac{1}{n}}\right)^{\kappa(n-\alpha)/2}}t\int_{\mathbb{R}^{n\kappa}}^{\prime}K_{\Delta}\left(\left(\tilde{\lambda}_{1}+\dots+\tilde{\lambda}_{\kappa}\right) t^{1/n}\right)\dfrac{\Vert \tilde{\lambda}_{1}+\dots+\tilde{\lambda}_{\kappa}\Vert^{\beta}}{\prod_{j=1}^{\kappa}\Vert\tilde{\lambda}_{j}\Vert^{(n-\alpha)/2}}\\
	&\times\prod_{j=1}^{\kappa}W(a^{-\frac{1}{n}}d\tilde{\lambda}_{j})=a^{1-\frac{\kappa\alpha}{2n}-\frac{\beta}{n}}X_{\kappa}(t).
	\end{align*}	
	
	Thus, the random process $X_{\kappa}(t)$ is a self-similar with the Hurst parameter~$H=1-\frac{\kappa\alpha}{2n}-\frac{\beta}{n}$.
\end{remark}

\begin{proof}
	By (\ref{eq5}) the process $X_{r,\kappa}(t)$ admits the following representation
	\[
	X_{r,\kappa}(t)=\dfrac{r^{\beta+\kappa\alpha/2-n}\mathcal{L}^{-\kappa/2}(r)}{(2\pi)^{n}c_{1}^{\kappa/2}(n,\alpha)g(0)h(1)}\int_{\Delta(rt^{1/n})}\left(\int_{\mathbb{R}^{n}}G(\Vert y\Vert)H_{\kappa}(\xi(x+y))dy\right)dx.
	\]
	By~(\ref{eq9new}) we obtain
	\begin{align}\label{eq13}
	X_{r,\kappa}(t)&=\dfrac{r^{\beta+\kappa\alpha/2-n}\mathcal{L}^{-\kappa/2}(r)}{(2\pi)^{n}c_{1}^{\kappa/2}(n,\alpha)g(0)h(1)}\int_{\Delta(rt^{1/n})}\bigg(\int_{\mathbb{R}^{n}}G(\Vert y\Vert)\notag\\
	&\times\bigg[\int_{\mathbb{R}^{n\kappa}}^{\prime}e^{i\langle\lambda_{1}+\dots+\lambda_{\kappa},x+y\rangle}\prod_{j=1}^{\kappa}\sqrt{f(\Vert\lambda_{j}\Vert)}\prod_{j=1}^{\kappa}W(d\lambda_{j})\bigg]dy\bigg)dx.
	\end{align}
	
	By Assumption~\ref{ass2} it follows $\prod_{j=1}^{\kappa}\sqrt{f(\Vert\lambda_{j}\Vert)}\in L_{2}\left(\mathbb{R}^{n\kappa}\right)$. By Assumption~{\rm\ref{ass4}}, ({\rm\ref{eq6}}) and Parseval's theorem $ G(\cdot)\in L_{2}(\mathbb{R}^{n})$. So, one can apply the stochastic Fubini's theorem to interchange the inner integrals in~(\ref{eq13}), see Theorem 5.13.1 in~\cite{peccati2011wiener}, which results in
	\begin{align}\label{eq8}
	X_{r,\kappa}(t)&=\dfrac{r^{\beta+\kappa\alpha/2-n}\mathcal{L}^{-\kappa/2}(r)}{c_{1}^{\kappa/2}(n,\alpha)g(0)h(1)}\int_{\Delta(rt^{1/n})}\int_{\mathbb{R}^{n\kappa}}^{\prime}e^{i\langle\lambda_{1}+\dots+\lambda_{\kappa},x\rangle}\hat{G}(\lambda_{1}+\dots+\lambda_{\kappa})\notag\\
	&\times\prod_{j=1}^{\kappa}\sqrt{f(\Vert\lambda_{j}\Vert)}\prod_{j=1}^{\kappa}W(d\lambda_{j})dx.
	\end{align}
	
	Note, that by Lemma~\ref{lem3} the integrand in~(\ref{eq8}) belongs to $L_{2}(\mathbb{R}^{n\kappa})$. Then, it follows from the stochastic Fubini's theorem, and Assumption~\ref{ass4} that
	\begin{align*}
	X_{r,\kappa}(t)&=\dfrac{r^{\beta+\kappa\alpha/2-n}\mathcal{L}^{-\kappa/2}(r)}{c_{1}^{\kappa/2}(n,\alpha)g(0)}\int_{\mathbb{R}^{n\kappa}}^{\prime}K_{\Delta(rt^{1/n})}(\lambda_{1}+\dots+\lambda_{\kappa})\\
	&\times\Vert\lambda_{1}+\dots+\lambda_{\kappa}\Vert^{\beta} g(\Vert\lambda_{1}+\dots+\lambda_{\kappa}\Vert)\prod_{j=1}^{\kappa}\sqrt{f(\Vert\lambda_{j}\Vert)}\prod_{j=1}^{\kappa}W(d\lambda_{j}),
	\end{align*}
	where
	\[
	K_{\Delta(rt^{1/n})}(\lambda)=\int_{\Delta(rt^{1/n})}e^{i\langle\lambda,x\rangle}dx.
	\]
	
	Note, that $K_{\Delta(rt^{1/n})}(\lambda)=tr^{n}K_{\Delta}\left( \lambda rt^{1/n}\right)$, where $K_{\Delta}(\cdot)$ is given by (\ref{eq3}). Therefore,
	\begin{align*}
	X_{r,\kappa}(t)&=t\dfrac{r^{\beta+\kappa\alpha/2}\mathcal{L}^{-\kappa/2}(r)}{c_{1}^{\kappa/2}(n,\alpha)g(0)}\int_{\mathbb{R}^{n\kappa}}^{\prime}K_{\Delta}\left(\left(\lambda_{1}+\dots+\lambda_{\kappa}\right)rt^{1/n}\right)\\
	&\times\Vert\lambda_{1}+\dots+\lambda_{\kappa}\Vert^{\beta} g(\Vert\lambda_{1}+\dots+\lambda_{\kappa}\Vert)\prod_{j=1}^{\kappa}\sqrt{f(\Vert\lambda_{j}\Vert)}\prod_{j=1}^{\kappa}W(d\lambda_{j}).
	\end{align*}
	Using the transformation $\lambda^{(j)}=r\lambda_{j},\ j=1,\dots,\kappa$, and the self-similarity of the Gaussian white noise we get
	\begin{align*}
	X_{r,\kappa}(t)&=t \dfrac{r^{\beta+ \kappa\alpha/2}\mathcal{L}^{-\kappa/2}(r)}{c_{1}^{\kappa/2}(n,\alpha)g(0)}
	\int_{\mathbb{R}^{n\kappa}}^{\prime}K_{\Delta}\left(\left(\lambda^{(1)}+\dots+\lambda^{(\kappa)}\right)t^{1/n}\right)\\
	&\times\left(r^{-1}\big\Vert\lambda^{(1)}+\dots+\lambda^{(\kappa)}\big\Vert\right)^{\beta}\prod_{j=1}^{\kappa}\sqrt{f(\Vert\lambda^{(j)}\Vert/r)}\\
	&\times g\left(r^{-1}\big\Vert\lambda^{(1)}+\dots+\lambda^{(\kappa)}\big\Vert\right)\prod_{j=1}^{\kappa}W(d\lambda^{(j)}/r)=t \dfrac{r^{\kappa\alpha/2}\mathcal{L}^{-\kappa/2}(r)r^{-n\kappa/2}}{c_{1}^{\kappa/2}(n,\alpha)g(0)}\\
	&\times\int_{\mathbb{R}^{n\kappa}}^{\prime}K_{\Delta}\left(\left(\lambda^{(1)}+\dots+\lambda^{(\kappa)}\right)t^{1/n}\right)\big\Vert\lambda^{(1)}+\dots+\lambda^{(\kappa)}\big\Vert^{\beta}\\
	&\times\prod_{j=1}^{\kappa}\sqrt{f(\Vert\lambda^{(j)}\Vert/r)}g\left(r^{-1}\big\Vert\lambda^{(1)}+\dots+\lambda^{(\kappa)}\big\Vert\right)\prod_{j=1}^{\kappa}W(d\lambda^{(j)})\\
	&=t\dfrac{r^{\kappa(\alpha-n)/2}\mathcal{L}^{-\kappa/2}(r)}{g(0)}\int_{\mathbb{R}^{n\kappa}}^{\prime}K_{\Delta}\left(\left(\lambda^{(1)}+\dots+\lambda^{(\kappa)}\right)t^{1/n}\right)\\
	&\times
	\big\Vert\lambda^{(1)}+\dots+\lambda^{(\kappa)}\big\Vert^{\beta}\prod_{j=1}^{\kappa}\sqrt{(\Vert\lambda^{(j)}\Vert/r)^{\alpha-n}\mathcal{L}(r/\Vert\lambda^{(j)}\Vert)}\\
	&\times g\left(r^{-1}\big\Vert\lambda^{(1)}+\dots+\lambda^{(\kappa)}\big\Vert\right)\prod_{j=1}^{\kappa}W(d\lambda^{(j)})\\
	&=\dfrac{t}{g(0)}\int_{\mathbb{R}^{n\kappa}}^{\prime}\dfrac{K_{\Delta}\left(\left(\lambda^{(1)}+\dots+\lambda^{(\kappa)}\right)t^{1/n}\right)}{\prod_{j=1}^{\kappa}\Vert\lambda^{(j)}\Vert^{(n-\alpha)/2}}
	\big\Vert\lambda^{(1)}+\dots+\lambda^{(\kappa)}\big\Vert^{\beta}\\
	&\times\prod_{j=1}^{\kappa}\sqrt{\mathcal{L}(r/\Vert\lambda^{(j)}\Vert)/\mathcal{L}(r)}g\left(r^{-1}\big\Vert\lambda^{(1)}+\dots+\lambda^{(\kappa)}\big\Vert\right)\prod_{j=1}^{\kappa}W(d\lambda^{(j)}).
	\end{align*}
	
	By the isometry property of multiple stochastic integrals
	\begin{align*}
	R_{r}:&=E\left(X_{r,\kappa}(t)-X_{\kappa}(t)\right)^{2}=t^{2}\int_{\mathbb{R}^{n\kappa}}\dfrac{\vert K_{\Delta}\left(\left(\lambda^{(1)}+\dots+\lambda^{(\kappa)}\right)t^{1/n}\right)\vert^{2}}{\prod_{j=1}^{\kappa}\Vert\lambda^{(j)}\Vert^{n-\alpha}}\\
	&\times \big\Vert\lambda^{(1)}+\dots+\lambda^{(\kappa)}\big\Vert^{2\beta}\left(Q_{r}(\lambda^{(1)},\dots,\lambda^{(\kappa)})-1\right)^{2}d\lambda^{(1)}\cdots d\lambda^{(\kappa)},
	\end{align*}
	where
	\[
	Q_{r}\left(\lambda^{(1)},\dots,\lambda^{(\kappa)}\right):=\dfrac{g\left(r^{-1}\big\Vert\lambda^{(1)}+\dots+\lambda^{(\kappa)}\big\Vert\right)}{g(0)}\sqrt{\prod_{j=1}^{\kappa}\mathcal{L}(r/\Vert\lambda^{(j)}\Vert)/\mathcal{L}(r)}.
	\]
	Note, that by Assumptions~\ref{ass2}, \ref{ass4}, and properties of slowly varying functions $Q_{r}(\lambda^{(1)},\dots,\lambda^{(\kappa)})$ pointwise converges to 1, when $r \rightarrow\infty$.
	
	Let us split $\mathbb{R}^{n\kappa}$ into the regions
	\begin{align*}
	B_{\mu}:=\bigg\{\left(\lambda^{(1)},\dots,\lambda^{(\kappa)}\right)\in\mathbb{R}^{n\kappa}&:\Vert\lambda^{(j)}\Vert\leq1,\ \text{if}\ \mu_{j}=-1,\\ &\text{and}\ \Vert\lambda^{(j)}\Vert>1,\ \text{if}\ \mu_{j}=1,j=1,\dots,\kappa\bigg\},
	\end{align*} 
	where $\mu=\left(\mu_{1},\dots,\mu_{\kappa}\right)\in\{-1,1\}^{\kappa}$ is a binary vector of length $\kappa$. Then we can represent the integral $R_{r}$ as
	\begin{align*}
	R_{r}&=t^{2}\int_{\cup_{\mu\in\{-1,1\}^{\kappa}}B_{\mu}}\dfrac{\vert K_{\Delta}\left(\left(\lambda^{(1)}+\dots+\lambda^{(\kappa)}\right)t^{1/n}\right)\vert^{2}}{\prod_{j=1}^{\kappa}\Vert\lambda^{(j)}\Vert^{n-\alpha}}\\
	&\times\big\Vert\lambda^{(1)}+\dots+\lambda^{(\kappa)}\big\Vert^{2\beta}\left(Q_{r}\left(\lambda^{(1)},\dots,\lambda^{(\kappa)}\right)-1\right)^{2}d\lambda^{(1)}\cdots d\lambda^{(\kappa)}.
	\end{align*}
	
	If $\left(\lambda^{(1)},\dots,\lambda^{(\kappa)}\right)\in B_{\mu}$ we estimate the integrand as follows
	\[
	\dfrac{\vert K_{\Delta}\left(\left(\lambda^{(1)}+\dots+\lambda^{(\kappa)}\right)t^{1/n}\right)\vert^{2}}{\prod_{j=1}^{\kappa}\Vert\lambda^{(j)}\Vert^{n-\alpha}}\big\Vert\lambda^{(1)}+\dots+\lambda^{(\kappa)}\big\Vert^{2\beta}\left(Q_{r}(\lambda^{(1)},\dots,\lambda^{(\kappa)})-1\right)^{2}
	\]
	\[
	\leq \dfrac{2\vert K_{\Delta}\left(\left(\lambda^{(1)}+\dots+\lambda^{(\kappa)}\right)t^{1/n}\right)\vert^{2}}{\prod_{j=1}^{\kappa}\Vert\lambda^{(j)}\Vert^{n-\alpha}}\big\Vert\lambda^{(1)}+\dots+\lambda^{(\kappa)}\big\Vert^{2\beta}\left(Q_{r}^{2}(\lambda^{(1)},\dots,\lambda^{(\kappa)})+1\right)
	\]
	\[
	=\dfrac{2\vert K_{\Delta}\left(\left(\lambda^{(1)}+\dots+\lambda^{(\kappa)}\right)t^{1/n}\right)\vert^{2}}{\prod_{j=1}^{\kappa}\Vert\lambda^{(j)}\Vert^{n-\alpha}}\big\Vert\lambda^{(1)}+\dots+\lambda^{(\kappa)}\big\Vert^{2\beta}
	\]
	\[
	\times\left(1+\dfrac{g^{2}\left(r^{-1}\big\Vert\lambda^{(1)}+\dots+\lambda^{(\kappa)}\big\Vert\right)}{g^{2}(0)}\prod_{j=1}^{\kappa}\Vert\lambda^{(j)}\Vert^{\mu_{j}\delta}\prod_{j=1}^{\kappa}\dfrac{(\frac{r}{\Vert\lambda^{(j)}\Vert})^{\mu_{j}\delta}\mathcal{L}(\frac{r}{\Vert\lambda^{(j)}\Vert})}{r^{\mu_{j}\delta}\mathcal{L}(r)}\right),
	\]
	where $\delta$ is an arbitrary positive number.
	
	Using the boundedness of the function $ g(\cdot) $, we can write
	\[\dfrac{\vert K_{\Delta}\left(\left(\lambda^{(1)}+\dots+\lambda^{(\kappa)}\right)t^{1/n}\right)\vert^{2}}{\prod_{j=1}^{\kappa}\Vert\lambda^{(j)}\Vert^{n-\alpha}}\big\Vert\lambda^{(1)}+\dots+\lambda^{(\kappa)}\big\Vert^{2\beta}\left(Q_{r}(\lambda^{(1)},\dots,\lambda^{(\kappa)})-1\right)^{2}\]
	\[\leq \dfrac{2\bigg\vert K_{\Delta}\left(\left(\lambda^{(1)}+\dots+\lambda^{(\kappa)}\right)t^{1/n}\right)\bigg\vert^{2}}{\prod_{j=1}^{\kappa}\Vert\lambda^{(j)}\Vert^{n-\alpha}}\big\Vert\lambda^{(1)}+\dots+\lambda^{(\kappa)}\big\Vert^{2\beta}\]
	\[\times\left(1+C\sup_{\left(\lambda_{1},\dots,\lambda_{\kappa}\right)\in
		B_{\mu}}\prod_{j=1}^{\kappa}\Vert\lambda^{(j)}\Vert^{\mu_{j}\delta}\prod_{j=1}^{\kappa}\dfrac{(r/\Vert\lambda^{(j)}\Vert)^{\mu_{j}\delta}\mathcal{L}(r/\Vert\lambda^{(j)}\Vert)}{r^{\mu_{j}\delta}\mathcal{L}(r)}\right).
	\] 

	By Theorem~\ref{the1new}
	\begin{align*}
	\lim_{r\rightarrow\infty}\dfrac{\sup_{\Vert\lambda^{(j)}\Vert\leq1}\left(r/\Vert\lambda^{(j)}\Vert\right)^{-\delta}\mathcal{L}\left(r/\Vert\lambda^{(j)}\Vert\right)}{r^{-\delta}\mathcal{L}(r)}=1;
	\end{align*}
	and
	\begin{align*}
	\lim_{r\rightarrow\infty}\dfrac{\sup_{\Vert\lambda^{(j)}\Vert>1}\left(r/\Vert\lambda^{(j)}\Vert\right)^{\delta}\mathcal{L}\left(r/\Vert\lambda^{(j)}\Vert\right)}{r^{\delta}\mathcal{L}(r)}=1.
	\end{align*}
	Therefore, there exists $r_{0}>0$ such that for all $r\geq r_{0}$ and $\left(\lambda^{(1)},\dots,\lambda^{(\kappa)}\right)\in B_{\mu}$
	\[\dfrac{\bigg\vert K_{\Delta}\left(\left(\lambda^{(1)}+\dots+\lambda^{(\kappa)}\right)t^{1/n}\right)\bigg\vert^{2}}{\prod_{j=1}^{\kappa}\Vert\lambda^{(j)}\Vert^{n-\alpha}}\big\Vert\lambda^{(1)}+\dots+\lambda^{(\kappa)}\big\Vert^{2\beta}\left(Q_{r}(\lambda^{(1)},\dots,\lambda^{(\kappa)})-1\right)^{2}\]
	\[\leq \dfrac{2\bigg\vert K_{\Delta}\left(\left(\lambda^{(1)}+\dots+\lambda^{(\kappa)}\right)t^{1/n}\right)\bigg\vert^{2}}{\prod_{j=1}^{\kappa}\Vert\lambda^{(j)}\Vert^{n-\alpha}}\big\Vert\lambda^{(1)}+\dots+\lambda^{(\kappa)}\big\Vert^{2\beta}\]
	\begin{equation}\label{eq9}+\dfrac{2C\vert K_{\Delta}\left(\left(\lambda^{(1)}+\dots+\lambda^{(\kappa)}\right)t^{1/n}\right)\vert^{2}}{\prod_{j=1}^{\kappa}\Vert\lambda^{(j)}\Vert^{n-\alpha-\mu_{j}\delta}}\big\Vert\lambda^{(1)}+\dots+\lambda^{(\kappa)}\big\Vert^{2\beta}.
	\end{equation}
	
	By Lemma \ref{lem2}, if we choose $\delta\in\left(0,\min\{\alpha,\frac{n-2\beta}{\kappa}-\alpha\}\right)$, the upper bound in (\ref{eq9}) is an integrable function on each $B_{\mu}$ and hence on $\mathbb{R}^{n\kappa}$ too. By Lebesque's dominated convergence theorem $R_{r}\to 0$ as $ r\to\infty$, which completes the proof.~\qed
\end{proof}

\section{Examples}
The objective of this section is to investigate the Hurst parameter $H$ of the limit process $X_{\kappa}(t)$ in Theorem~\ref{theo4}. The section provides simple examples where the range $\left(\gamma(\Delta),1\right)$ for $H$ is explicitly specified depending on the observation window $\Delta\subset\mathbb{R}^{n}$.

Recall that $H=1-\frac{\kappa\alpha}{2n}-\frac{\beta}{n}$ and  $\mathcal{I}_{1}(\kappa\alpha)$ is defined as
$$\mathcal{I}_{1}(\kappa\alpha)=C\int_{\mathbb{R}^{n}}\dfrac{|K_{\Delta}\left(\lambda\right)|^{2}d\lambda}{\Vert\lambda\Vert^{n-\kappa\alpha-2\beta}}.$$

\begin{example}\label{ex1}
	Let $n=1$ and $\Delta$ has the form $\Delta=[-b,a]\subset\mathbb{R}$, where $a,b\geq0$ and $\vert a+b\vert\neq0$. Using {\rm(\ref{eq3})} one obtains
	$$K_{[-b,a]}\left(\lambda\right)=\int_{-b}^{a}e^{i\lambda x}dx=\dfrac{e^{ia\lambda}-e^{-ib\lambda}}{i\lambda}.$$
	Note, that as $\lambda\to 0$ it holds 
	$|K_{[-b,a]}\left(\lambda\right)|\to b+a < \infty.$
	
	
	Now, as $\lambda\to\infty$
	$$|K_{[-b,a]}\left(\lambda\right)|=\left|\dfrac{e^{ia\lambda}-e^{-ib\lambda}}{i\lambda}\right|\leq \dfrac{\left|e^{ia\lambda}\right|+\left|e^{-ib\lambda}\right|}{|i\lambda|}=\dfrac{2}{|\lambda|}.$$
	
	Let $\tau_{1}=\tau_{2}=\cdots=\tau_{\kappa}=\alpha$. Then, by~(\ref{eq*}) $\mathcal{I}_{\kappa}(\alpha,\dots,\alpha)$ can be estimated~as
	\begin{align}\label{eq16}
	&\mathcal{I}_{\kappa}(\alpha,\dots,\alpha)\leq C\mathcal{I}_{1}(\kappa\alpha)=C\int_{\mathbb{R}}\dfrac{|K_{[-b,a]}\left(\lambda\right)|^{2}d\lambda}{\vert\lambda\vert^{1-\kappa\alpha-2\beta}}\leq C_{1}\int_{\vert\lambda\vert\leq C_{0}}\dfrac{d\lambda}{\vert\lambda\vert^{1-\kappa\alpha-2\beta}}\notag\\
	&+C_{2}\int_{\vert\lambda\vert> C_{0}}\dfrac{d\lambda}{\vert\lambda\vert^{3-\kappa\alpha-2\beta}}\leq C\int_{0}^{C_{0}}\dfrac{d\rho}{\rho^{1-\kappa\alpha-2\beta}}+C\int_{C_{0}}^{\infty}\dfrac{d\rho}{\rho^{3-\kappa\alpha-2\beta}}.
	\end{align}
	
	Note, that the two conditions $1-\kappa\alpha-2\beta<1$ and $3-\kappa\alpha-2\beta>1$ are required to guarantee that, integrals in~{\rm(\ref{eq16})} are finite. The first condition imples $1-\frac{\kappa\alpha}{2}-\beta<1$ and the second one $1-\frac{\kappa\alpha}{2}-\beta>0$. So $\mathcal{I}_{\kappa}(\alpha,\dots,\alpha)<\infty$ if $H\in(0,1)$. 	 
\end{example}

\begin{example}
	Let $\Delta$ be an $n$-dimensional ball of radius 1, i.e. $\Delta=v(1)\subset\mathbb{R}^{n}$. In this case
	$$K_{v(1)}\left(\lambda\right)=\int_{v(1)}e^{i\langle \lambda,x \rangle}dx.$$
	Note, that as $\Vert\lambda\Vert\to 0$ it holds
	$|K_{v(1)}\left(\lambda\right)|\leq C<\infty.$
	
	Now, as $\Vert\lambda\Vert\to\infty$ we obtain
	$$|K_{v(1)}\left(\lambda\right)|=C\left|\dfrac{J_{n/2}(\Vert\lambda\Vert)}{\Vert\lambda\Vert^{n/2}}\right|<\dfrac{C}{\Vert\lambda\Vert^{\frac{n+1}{2}}}.$$
Let $\tau_{1}=\tau_{2}=\cdots=\tau_{\kappa}=\alpha$. Then, by~(\ref{eq*}) $\mathcal{I}_{\kappa}(\alpha,\dots,\alpha)$ can be estimated~as
	\begin{align}\label{eq17}
	\mathcal{I}_{\kappa}(\alpha,\dots,\alpha)&\leq C\int_{\mathbb{R}^{n}}\dfrac{|K_{v(1)}\left(\lambda\right)|^{2}d\lambda}{\Vert\lambda\Vert^{n-\kappa\alpha-2\beta}}\notag\\&\leq C_{1}\int_{\Vert\lambda\Vert\leq C_{0}}\dfrac{d\lambda}{\Vert\lambda\Vert^{n-\kappa\alpha-2\beta}}+C_{2}\int_{\Vert\lambda\Vert> C_{0}}\dfrac{d\lambda}{\Vert\lambda\Vert^{2n-\kappa\alpha-2\beta+1}}\notag\\
	&\leq C\left[\int_{0}^{C_{0}}\dfrac{d\rho}{\rho^{1-\kappa\alpha-2\beta}}+\int_{C_{0}}^{\infty}\dfrac{d\rho}{\rho^{n-\kappa\alpha-2\beta+2}}\right].
	\end{align}
	
	The two integrals in {\rm(\ref{eq17})} are finite provided that $1-\kappa\alpha-2\beta<1$ and $n-\kappa\alpha-2\beta+2>1$. It follows that $\frac{1}{2}-\frac{1}{2n}<1-\frac{\kappa\alpha}{2n}-\frac{\beta}{n}<1$, i.e. $H\in(\frac{1}{2}-\frac{1}{2n},1)$, where $n\in\mathbb{N}$. Note, that for $n=1$ the Hurst index $H\in(0,1)$ and one obtains the same result as in Example~{\rm\ref{ex1}}. For $n>1$ we get $\gamma(v(1))=\frac{1}{2}-\frac{1}{2n}<\frac{1}{2}$.
\end{example}

\begin{example}
	Let $n=2$, $\Delta=\Box(1)=[-1,1]^{2}\subset\mathbb{R}^{2}$. In this case
	\[
	K_{\Box(1)}\left(\lambda\right)=K_{\Box(1)}\left(\lambda_{1},\lambda_{2}\right)=\int_{-1}^{1}\int_{-1}^{1}e^{i\left(\lambda_{1}x_{1}+\lambda_{2}x_{2}\right)}dx_{1}dx_{2}=\dfrac{\sin\lambda_{1}}{\lambda_{1}}\dfrac{\sin\lambda_{2}}{\lambda_{2}}.
	\]
	
	Note, that $|K_{\Box(1)}\left(\lambda_{1},\lambda_{2}\right)|\leq C$.
	
	When $\min(\lambda_{1},\lambda_{2})>C_{0}>0$ we get
	$$|K_{\Box(1)}\left(\lambda_{1},\lambda_{2}\right)|\leq
	\dfrac{C}{\vert\lambda_{1}\vert\vert\lambda_{2}\vert},$$
	
	and if $\lambda_{j}>C_{0}>0,\ \lambda_{i}\leq C_{0},\  i,j\in\{1,2\},\ i\neq j$, then
	\begin{align}\label{eq17new}
	\sup_{\lambda_{i},i\neq j}|K_{\Box(1)}\left(\lambda_{1},\lambda_{2}\right)|\leq
	\dfrac{C}{\vert\lambda_{j}\vert}.
	\end{align}
	
	Let us split $\mathbb{R}^{2}$ into the regions
	\begin{align*}
	A_{1}^{\prime}:=\{(\lambda_{1},\lambda_{2})\in\mathbb{R}^{2}:\vert\lambda_{1}\vert\leq C_{0},\vert\lambda_{2}\vert\leq C_{0}\},\\
	A_{2}^{\prime}:=\{(\lambda_{1},\lambda_{2})\in\mathbb{R}^{2}:\vert\lambda_{1}\vert\leq C_{0},\vert\lambda_{2}\vert> C_{0}\},\\
	A_{3}^{\prime}:=\{(\lambda_{1},\lambda_{2})\in\mathbb{R}^{2}:\vert\lambda_{1}\vert> C_{0},\vert\lambda_{2}\vert\leq C_{0}\},\\
	A_{4}^{\prime}:=\{(\lambda_{1},\lambda_{2})\in\mathbb{R}^{2}:\vert\lambda_{1}\vert> C_{0},\vert\lambda_{2}\vert> C_{0}\},
	\end{align*}
	where $C_{0}>0$.
	
	Then $\mathcal{I}_{1}(\kappa\alpha)$ can be written as
	\begin{align}\label{eq18}
	\mathcal{I}_{1}(\kappa\alpha)&=\sum_{j=1}^{4}\mathcal{I}_{1}^{(j)}(\kappa\alpha),
	\end{align}
	where $\mathcal{I}_{1}^{(j)}(\kappa\alpha):=\int_{A_{j}^{\prime}}\frac{|K_{\Box(1)}\left(\lambda\right)|^{2}d\lambda}{\Vert\lambda\Vert^{2-\kappa\alpha-2\beta}},\ j=1,\dots, 4$.
	
	We will consider each term in {\rm(\ref{eq18})} separately. The term $\mathcal{I}_{1}^{(1)}(\cdot)$ can be estimated as \begin{align*}
	\mathcal{I}_{1}^{(1)}(\kappa\alpha)&=\int_{A_{1}^{\prime}}\dfrac{|K_{\Box(1)}\left(\lambda\right)|^{2}d\lambda}{\Vert\lambda\Vert^{2-\kappa\alpha-2\beta}}\leq C\int_{A_{1}^{\prime}}\dfrac{d\lambda_{1}d\lambda_{2}}{\left(\vert\lambda_{1}\vert\vert\lambda_{2}\vert\right)^{1-\frac{\kappa\alpha}{2}-\beta}}\notag\\
	&\leq C\left(\int_{\vert\lambda_{1}\vert\leq C_{0}}\dfrac{d\lambda_{1}}{\vert\lambda_{1}\vert^{{1-\frac{\kappa\alpha}{2}-\beta}}}\right)^{2}.
	\end{align*} 
	The last integral is finite provided that $1-\frac{\kappa\alpha}{2}-\beta<1$, i.e. $H<1$.
	
	Using {\rm(\ref{eq17new})} the term $\mathcal{I}_{1}^{(2)}(\cdot)$ can be estimated as
	\begin{align*}
	\mathcal{I}_{1}^{(2)}(\kappa\alpha)&=\int_{A_{2}^{\prime}}\dfrac{|K_{\Box(1)}\left(\lambda\right)|^{2}d\lambda}{\Vert\lambda\Vert^{2-\kappa\alpha-2\beta}}\leq C\int_{A_{2}^{\prime}}\dfrac{d\lambda_{1}d\lambda_{2}}{\vert\lambda_{2}\vert^{2}\left(\vert\lambda_{1}\vert\vert\lambda_{2}\vert\right)^{1-\frac{\kappa\alpha}{2}-\beta}}\notag\\
	&\leq C\int_{\vert\lambda_{1}\vert\leq C_{0}}\dfrac{d\lambda_{1}}{\vert\lambda_{1}\vert^{{1-\frac{\kappa\alpha}{2}-\beta}}}\int_{\vert\lambda_{2}\vert>C_{0}}\dfrac{d\lambda_{2}}{\vert\lambda_{2}\vert^{{3-\frac{\kappa\alpha}{2}-\beta}}}.
	\end{align*} 
	The last integrals are finite provided that $1-\frac{\kappa\alpha}{2}-\beta<1$ and $3-\frac{\kappa\alpha}{2}-\beta>1$. It follows that $H\in(0,1)$. Similarly, one obtains $\mathcal{I}_{1}^{(3)}(\kappa\alpha)<\infty$ when $H\in(0,1)$.
	
	Now, for the term $\mathcal{I}_{1}^{(4)}(\cdot)$ we obtain
	\begin{align*}
	\mathcal{I}_{1}^{(4)}(\kappa\alpha)&=\int_{A_{4}^{\prime}}\dfrac{|K_{\Box(1)}\left(\lambda\right)|^{2}d\lambda}{\Vert\lambda\Vert^{2-\kappa\alpha-2\beta}}\leq C\int_{A_{4}^{\prime}}\dfrac{d\lambda_{1}d\lambda_{2}}{\vert\lambda_{1}\vert^{2}\vert\lambda_{2}\vert^{2}\left(\vert\lambda_{1}\vert\vert\lambda_{2}\vert\right)^{1-\frac{\kappa\alpha}{2}-\beta}}\notag\\
	&\leq C\left(\int_{\vert\lambda_{2}\vert>C_{0}}\dfrac{d\lambda_{2}}{\vert\lambda_{2}\vert^{{3-\frac{\kappa\alpha}{2}-\beta}}}\right)^{2}.
	\end{align*}
	The last integral is finite provided that $3-\frac{\kappa\alpha}{2}-\beta>1$. It follows that $H>0$.
	
	By combining the above results for {\rm(\ref{eq18})}, one obtains $\mathcal{I}_{1}(\kappa\alpha)<\infty$. Therefore, using $\tau_{1}=\tau_{2}=\cdots=\tau_{\kappa}=\alpha$ and the inequality~(\ref{eq*}) we obtain that the result of Theorem~\ref{theo4} is true when $H\in(0,1)$.
\end{example}
\noindent\textbf{Acknowledgements} This research was partially supported under the Australian Research Council's Discovery Projects (project DP160101366).

\end{document}